\newtheorem{theorem}{Theorem}
\newtheorem{lemma}{Lemma}
\newtheorem{proposition}{Proposition}
\newtheorem{corollary}{Corollary}
\theoremstyle{definition}
\newtheorem{definition}{Definition}
\newcommand{\R}{\mathbb{R}}
\newcommand{\E}{\mathbb{E}}
\newcommand{\G}{\mathbb{G}}
\newcommand{\F}{\mathbb{F}}
\title{Transform orders and stochastic monotonicity of statistical functionals}
\author[1]{Tommaso Lando\thanks{Tommaso Lando was supported by the Italian funds ex MURST 60\% 2021, by the Czech Science Foundation (GACR) under project 20-16764S and moreover by SP2022/4, an SGS research project of V\v{S}B-TU Ostrava. The support is greatly acknowledged.\\
\hspace*{.6cm}E-mail: tommaso.lando@unibg.it; Corresponding author.}}
\affil[1]{Department of Economics, University of Bergamo, Italy; Department of Finance, V\v{S}B-TU Ostrava, Czech Republic}
\author[2]{Idir Arab\thanks{Idir Arab and Paulo Eduardo Oliveira were partially supported by the Centre for Mathematics of the University of Coimbra - UIDB/00324/2020, funded by the Portuguese Government through FCT/MCTES.\\
\hspace*{.6cm}E-mail: idir.bhh@gmail.com}}
\affil[2]{CMUC, Department of Mathematics, University of Coimbra, Portugal}
\author[2]{Paulo Eduardo Oliveira\thanks{E-mail: paulo@mat.uc.pt}}
\date{}
\begin{document}

%

\maketitle

\begin{abstract}
In some inferential statistical methods, such as tests and confidence intervals, it is important to describe the stochastic behavior of statistical functionals, aside from their large sample properties. We study such a behavior in terms of the usual stochastic order. For this purpose, we introduce a generalized family of stochastic orders, which is referred to as transform orders, showing that it provides a flexible framework for deriving stochastic monotonicity results. Given that our general definition makes it possible to obtain some well known ordering relations as particular cases, we can easily apply our method to different families of functionals. These include some prominent inequality measures, such as the generalized entropy, the Gini index, and its generalizations. We also illustrate the applicability of our approach by determining the least favorable distribution, and the behavior of some bootstrap statistics, in some goodness-of-fit testing procedures.
\end{abstract}

%
%
%

\textbf{Keywords}: Gini index, hazard function, inequality, nonparametric test, stochastic dominance.

\section{Introduction}

In statistics, one is often interested in estimating an unknown characteristic of a given distribution $F$, rather than the distribution itself. In many such cases, these characteristics may be represented by some \textit{probability functional} $T(F):\mathcal{F}\rightarrow \R$, where $\mathcal{F}$ is the space of cumulative distribution functions (CDFs). The most intuitive way of estimating $T(F)$ is the \textit{plug-in} method, which simply consists in replacing the unknown $F$ with its natural estimator, namely, the empirical CDF $\F_n$. Correspondingly, we shall refer to $T(\F_n)$ as a \textit{statistical functional}. However, since the empirical distribution is a random process, that is, a function of the random sample, then statistical functionals are random variables, and therefore it is important to study their behavior from a stochastic point of view. While most of the work in the literature focuses on large sample properties of statistical functionals, because of their applications to estimation theory, we are concerned with a different problem, namely, we are interested in establishing relationships between their distribution functions that hold for any given sample, regardless of its size.

Let $T$ be some functional, and let $\F_n$ and $\G_n$ be the empirical CDFs that correspond to random samples from $F$ and $G$, respectively.
We are interested in determining the conditions on $F$, $G$ and $T$, such that $T(\F_n)$ \textit{stochastically dominates} $T(\G_n)$ with respect to the \textit{usual stochastic order} (recalled below in Definition~\ref{orders}). This is a relevant information as it may be used to help characterizing finite sample properties of testing procedures, as we will be describing later. Results of this kind may be rather complicated to derive, depending on the mathematical form of $T$, and on the initial conditions on the baseline distributions $F$ and $G$. In this regard, \cite{greenwood} recently studied the stochastic dominance behavior of the Greenwood statistic \citep{greenwood1946} when the baseline distributions are comparable in terms of the \textit{star order}. We will show that this result is a particular case of a much more general one, making it possible to establish dominance relations within a wider family of statistical functionals, specifically, functionals that are compatible with a general class of \textit{transform orders}, generated by some family of reference functions (see Definition~\ref{transf order} below).

Our main result, Theorem~\ref{teo}, presented in Section~\ref{sec:2}, shows that, if the baseline distributions are ordered with respect to some transform order, the statistical functionals are stochastically ordered, provided that the corresponding probability functional is order-preserving with respect to the same transform order. This result has a wide range of applicability, as our definition of transform order includes the usual stochastic order, the convex transform order, the star order, the superadditive and the dispersive orders \citep{shaked2007}. Therefore, we are able to derive stochastic properties of several important families of functionals.

As an immediate application of Theorem~\ref{teo}, in Section~\ref{sec:3} we obtain the stochastic monotonicity of many important measures of inequality, including the generalized entropy class \citep{shorrocks1980,shorrocks}, the well known Gini index \citep{gini} and some of its generalizations \citep{mehran,donaldson}.
Differently, in Section~\ref{sec:tests}, we use Theorem~\ref{teo} to determine, in a simple way, the least favorable distribution, as well as the behavior of some bootstrap statistics, for some goodness-of-fit testing problems, related to the convex order or to the star order, which are relevant in areas such as survival analysis, reliability, and shape-constrained inference \citep{barlow1971}.

\section{Main results}
\label{sec:2}

We begin by establishing some notations. The random variables $X$ and $Y$ have CDFs $F$ and $G$ and supports $S_F$ and $S_G$, respectively. Let us denote with $F^{-1}$ the left continuous generalized inverse, namely, the quantile function, of $F$. We recall that a stochastic order is a binary relation $\succ$ over $\mathcal{F}$ that is reflexive and transitive. In particular, observe that $\succ$ does not generally satisfy the antisymmetry property, that is, $F\succ G$ and $G\succ F$ do not necessarily imply $X=_dY$, and it is generally not total. A functional $T$ is said to be \textit{isotonic}, or order-preserving, with respect to $\succ$, whenever, for every pair $F,G\in \mathcal{F}$ such that $F\succ G$, it holds that $T(F)\geq T(G)$. Let us denote by $\mathcal{T}$ the class of functionals defined on $\mathcal{F}$. Then, we may represent by $\mathcal{I}(\succ)$ the class of functionals that are isotonic with respect to $\succ$, that is,
\begin{equation*}
\mathcal{I}(\succ)=\{T\in\mathcal{T}:F\succ G\implies T(F)\geq T(G),\;\forall F,G\in\mathcal{F} \}.
\end{equation*}
Likewise, the class of functionals that are \textit{antitonic}, or order-reversing, with respect to $\succ$, is denoted as $\mathcal{A}(\succ)=\{T\in\mathcal{T}:F\succ G\implies T(F)\leq T(G),\;\forall F,G\in\mathcal{F} \}$. It is worth noticing that, given a pair of stochastic orders, say $\succ_0$ and $\succ_1$, if $F\succ_0{G}\implies{F}\succ_1{G}$, then $\mathcal{I}(\succ_1)\subset \mathcal{I}(\succ_0)$.

Given that stochastic orders and probability functionals depend only on the distribution functions of the random variables, we may write $F\succ G$ or $X\succ Y$, and $T(F)$ or $T(X)$, interchangeably.

\subsection{A general family of orders}We now introduce the following definition of a transform order, generated by a family of functions $\mathcal{C}$, whose choice enables the generalization of some well known stochastic orders.
As usual, $f|_E$ denotes the restriction of a function $f$ to the set $E$.

\begin{definition}\label{transf order}
Let $\mathcal{C}$ be some family of non-decreasing functions. We say that $X$ dominates $Y$ with respect to the $\mathcal{C}$-transform order, denoted by $X\geq_\mathcal{C} Y$, if $F^{-1}\circ G|_{S_G}\in \mathcal{C}$.
\end{definition}

Note that $X=_d F^{-1}\circ G (Y)$, so, basically, a transform order holds when the function that transforms (distributionally) one random variable into the other satisfies some properties of interest. Such properties fully characterize the dominance relations.
It is easy to see that $\geq_\mathcal{C}$ fulfills the basic properties of stochastic orders, and, moreover, $\mathcal{C}_0\subset\mathcal{C}_1$ means that $F\geq_{\mathcal{C}_0}G\implies F\geq_{\mathcal{C}_1}G$. As limiting cases, if $\mathcal{C}$ is the class of non-decreasing functions, then $\geq_\mathcal{C}$ is always verified, whereas if $\mathcal{C}$ contains just the identity function then $\geq_\mathcal{C}$ coincides with equality in distribution, $=_d$. The usual stochastic order, generally defined by the relation $F(x)\leq G(x)$, for every $x\in \R$, \citep{shaked2007}, is a transform order, obtained by choosing $\mathcal{C}$ as the class of functions $\psi$ such that $\psi(x)\geq x$. Other well known transform orders my be obtained by appropriately choosing
$\mathcal{C}$: the classes of convex, star-shaped, or superadditive functions, yield the \textit{convex transform order}, \textit{the star order}, or the \textit{superadditive order}, respectively. Also the \textit{dispersive order} can be obtained by choosing $\mathcal{C}$ as the class of functions $\psi$ such that $\psi(x)-x$ is nondecreasing \citep{shaked2007}. Since these orders are particularly important, we recall their definitions below (remember that $F^{-1}\circ G$ is always non-decreasing by construction). However, Definition~\ref{transf order} above is quite general and enables the possibility of defining new stochastic orders. 
\begin{definition}
\label{orders}
We say that $X$ dominates $Y$ with respect to
\begin{enumerate}
\item the usual stochastic order, denoted by $X\geq_{st}Y$, if $F^{-1}\circ G|_{S_G}(x)\geq x$, or equivalently, $F(x)\leq G(x)$, for every $x\in\R$;

\item
the convex transform order, denoted by $X\geq_cY$, if {$F^{-1}\circ G|_{S_G}$ is convex};

\item
the star order, denoted by $X\geq_*Y$, if {$F^{-1}\circ G|_{S_G}$ is star-shaped};

\item
the superadditive order, denoted by $X\geq_{su}Y$, if {$F^{-1}\circ G|_{S_G}$ is superadditive};

\item
the dispersive order, denoted by $X\geq_{disp}Y$, if {$(F^{-1}\circ G(x)-x)|_{S_G}$ is increasing}.

\end{enumerate}
\end{definition}
For non-negative random variables, the following relations hold:
\begin{equation*}
X\geq_c Y\implies X\geq_* Y \iff  \log X\geq_{disp} \log Y \implies X\geq_{su}Y.
\end{equation*}
In the sequel, we will further extend Definition \ref{orders} to enable comparisons of (discrete) empirical distributions, which is necessary to obtain the desired stochastic monotonicity between statistical functionals.

\subsection{Stochastic monotonicity results}
We are concerned with the problem of ranking statistical functionals in terms of the usual stochastic order. Therefore, we are interested in determining the conditions on $F$, $G$ and $T$ such that {$P(T(\F_n)\leq x)\leq P(T(\G_n)\leq x)$} holds {for every $x\in\R$}.
Results of this kind are not predictable in general. In this regard, intuitively, one may wonder whether $T(F)\geq T(G)$ is sufficient for $T(\F_n)\geq_{st} T(\G_n)$, or whether $X\succ Y$ and $T\in\mathcal{I}(\succ)$ imply $T(\F_n)\geq_{st} T(\G_n)$, for a general order $\succ$. It is important to remark that, in general, this is not true (see Subsection~\ref{sec:useful} below).

Our main result, Theorem~\ref{teo}, shows that transform orders provide a general framework for deriving stochastic monotonicity of statistical functionals of the form $T(\F_n)$, conditionally on the isotonicity properties of $T$. However, before proceeding, we need some further preliminary definitions and notations. Indeed, some transform orders may be not defined for realizations, $F_n$ and $G_n$, of the empirical distribution functions $\F_n$ and $\G_n$, as these realizations are discrete. For instance, in the empirical case, the composition $F^{-1}_n\circ G_n$ is a step function, hence neither convex nor star-shaped, so these CDFs are not comparable in terms of $\geq_c$ or $\geq_*$. Therefore, to prove our main results, we must \textit{extend}, when needed, any transform order to the class of observed empirical distributions, according to the following general definition.

\begin{definition}\label{general}
Let $\mathcal{C}$ be some family of non-decreasing functions. We say that $X$ dominates $Y$ with respect to the extended $\mathcal{C}$-transform order, denoted as $X\geq^{e}_{\mathcal{C}} Y$, if there exists a function $\phi\in \mathcal{C}$ such that $\phi|_{S_G}=F^{-1}\circ G|_{S_G}$.
\end{definition}

Note the distinction between Definitions~\ref{transf order} and \ref{general}: the former requires that $F^{-1}\circ G$ satisfies some property, described by $\mathcal{C}$, on the support of $G$, while the latter assumes the existence of some function $\phi\in\mathcal{C}$, which coincides with the composition $F^{-1}\circ G$ on the support of $G$, formally, $\phi|_{S_G}=F^{-1}\circ G|_{S_G}$. This is useful because, for instance, notions such as convexity and star-shapedness are generally not defined for functions whose support is a discrete set of points. In such cases, Definition~\ref{general} makes it possible to consider $\mathcal{C}$ as the class of convex or star-shaped functions, even when $F$ and $G$ are empirical distributions, which is out of the scope of Definition~\ref{transf order}. Indeed, with regard to the pair $F_n$, $G_n$, where $S_{G_n}=\{y_1,\ldots,y_n\}$, the ordering $F_n{\geq^e_{\mathcal{C}}} G_n$ holds if there exists a function $\phi\in \mathcal{C}$ such that $\phi(y_i)=F^{-1}_n\circ G_n(y_i)$, $i=1,\ldots,n$. Moreover, note that $\geq_\mathcal{C}^e$ is a weaker version of  $\geq_\mathcal{C}$, that is, $X\geq_\mathcal{C}Y$ implies $X\geq_\mathcal{C}^e Y$.

We may now establish our main result. Consider a sample $y_1,\ldots,y_n$ from $Y$. As $X=_d F^{-1}\circ G(Y)$, the values ${x_i^\ast=}F^{-1}\circ G(y_i)$, $i=1,\ldots,n$, may be seen as observations from $X$. Now, let us denote with $G_n$ the realization of $\G_n$ corresponding to the sample $y_1,\ldots,y_n$, and with $F^\ast_n$ the particular realization of $\F_n$ corresponding to the sample $x_1^\ast,\ldots,x_n^\ast$. Then, we obtain the following sufficient conditions for $T(\F_n)\geq_{st} T(\G_n)$.

\begin{theorem}\label{teo}\
\begin{enumerate}\item If $X\geq_\mathcal{C}^eY$ and $T\in\mathcal{I}(\geq^e_{\mathcal{C}})$, then $T(\F_n)\geq_{st} T(\G_n)$.
\item If $X\geq_\mathcal{C}^eY$ and $T(F_n^*)\geq T(G_n)$, then $T(\F_n)\geq_{st} T(\G_n)$.
\end{enumerate}
\end{theorem}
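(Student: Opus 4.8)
The plan is to prove statement (2) first by a coupling argument, and then obtain (1) as an immediate consequence. To see that (1) reduces to (2), observe that if $X\geq^e_{\mathcal{C}}Y$ then, as I will argue below, the coupled empirical realizations always satisfy $F_n^\ast\geq^e_{\mathcal{C}}G_n$; combined with $T\in\mathcal{I}(\geq^e_{\mathcal{C}})$ this yields $T(F_n^\ast)\geq T(G_n)$, which is precisely the hypothesis of (2). Hence it suffices to establish (2).

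First I would set up the coupling. Draw $Y_1,\ldots,Y_n$ i.i.d.\ from $G$ and form the empirical CDF $G_n$, which is distributed as $\G_n$. Define $X_i^\ast=F^{-1}\circ G(Y_i)$. Since $F^{-1}\circ G(Y)=_d X$ and the $Y_i$ are independent, the $X_i^\ast$ are i.i.d.\ from $F$, so the empirical CDF $F_n^\ast$ built from them is distributed as $\F_n$. Thus on this common probability space $T(F_n^\ast)=_d T(\F_n)$ and $T(G_n)=_d T(\G_n)$, and it is enough to compare $T(F_n^\ast)$ with $T(G_n)$ realization by realization.

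The key step is to show that, for every realization, $F_n^\ast\geq^e_{\mathcal{C}}G_n$. Because $F^{-1}\circ G$ is non-decreasing, it preserves the order of the sample, so the $i$-th order statistic of $x_1^\ast,\ldots,x_n^\ast$ equals $F^{-1}\circ G$ evaluated at the $i$-th order statistic of $y_1,\ldots,y_n$. Evaluating the generalized inverses at the sample points then gives $F_n^{\ast-1}\circ G_n(y_i)=F^{-1}\circ G(y_i)$ for each $i$. Since $X\geq^e_{\mathcal{C}}Y$ furnishes some $\phi\in\mathcal{C}$ with $\phi|_{S_G}=F^{-1}\circ G|_{S_G}$, and the sample points $y_i\in S_G$ almost surely, the same $\phi$ satisfies $\phi(y_i)=F_n^{\ast-1}\circ G_n(y_i)$ on $S_{G_n}=\{y_1,\ldots,y_n\}$; by Definition~\ref{general} this is exactly $F_n^\ast\geq^e_{\mathcal{C}}G_n$.

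Finally I would assemble the pieces. Under the hypothesis of (2), $T(F_n^\ast)\geq T(G_n)$ holds for every realization, i.e.\ almost surely on the coupled space. Almost sure domination of one random variable by another implies the usual stochastic order, so $T(F_n^\ast)\geq_{st}T(G_n)$; transporting this through the distributional identities above yields $T(\F_n)\geq_{st}T(\G_n)$. I expect the main obstacle to be the careful verification of the identity $F_n^{\ast-1}\circ G_n(y_i)=F^{-1}\circ G(y_i)$ at the sample points: one must pin down the convention for the empirical quantile function and handle ties together with the behaviour of the generalized inverse at jump points, all of which hinge on the monotonicity of $F^{-1}\circ G$.
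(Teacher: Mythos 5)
Your proposal is correct and takes essentially the same route as the paper's proof: both rely on the coupling $X_i^\ast=F^{-1}\circ G(Y_i)$, on establishing the realization-wise ordering $F_n^\ast\geq^e_{\mathcal{C}}G_n$ (the paper derives it abstractly via the quantile function $F^{-1}\circ G\circ H^{-1}$ of $F^{-1}\circ G(Z)$ for any $Z$ supported in $S_G$, while you verify it directly at the order statistics, which is where the tie/jump-point care you flag indeed belongs), and on the fact that almost-sure domination on the coupled space yields $T(\F_n)\geq_{st}T(\G_n)$. The only cosmetic difference is that you prove part (2) first and deduce (1) from it, whereas the paper treats both cases in a single argument.
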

\begin{proof}
Let $\tilde{X}_n$ and $\tilde{Y}_n$ be two random variables whose distributions are $F^\ast_n$ and $G_n$, respectively. Obviously, $F^{-1}\circ G(\tilde{Y}_n)=_d\tilde{X}_n$. If $X\geq_\mathcal{C}^eY$, then there exists some $\phi\in\mathcal{C}$ satisfying Definition~\ref{general}. Let $Z$ be a random variable with distribution $H$ and support $S_H$ included in $S_G$. $X\geq_\mathcal{C}^e Y$ implies
\begin{equation*}\phi|_{S_H}=(\phi|_{S_G})|_{S_H}=(F^{-1}\circ G|_{S_G})|_{S_H}=F^{-1}\circ G\circ H^{-1}\circ H|_{S_H}.\end{equation*}
Note that the quantile function of $F^{-1}\circ G(Z)$ is $F^{-1}\circ G\circ H^{-1}$, hence the latter relation is equivalent to $F^{-1}\circ G(Z)\geq_\mathcal{C}^e Z$, for every random variable $Z$ with support included in that of $Y$. Given that the support of $\tilde{Y}_n$ is always included in the support of $Y$, this yields
$\tilde{X}_n=_dF^{-1}\circ G(\tilde{Y}_n)\geq^e_{\mathcal{C}}\tilde{Y}_n$. Now,  in case 1), as we are assuming that $T\in\mathcal{I}(\geq^e_{\mathcal{C}})$, it follows that $T(F^\ast_n)\geq T(G_n)$, while, in case 2), this holds by assumption. Note that we are not interested in comparing $G_n$ and $F^\ast_n$, however, this result contains information about the distributional behavior of $T(\F_n)$ and $T(\G_n)$. In fact, since $T(F^\ast_n)\geq T(G_n)$ holds for every possible pair of realizations $x_1^\ast,\ldots,x_n^\ast$ and $y_1,\ldots,y_n$, obtained as above, the latter relation can be equivalently expressed as $T(\F_n)\geq_{st} T(\G_n)$ (again, this characterization of the usual stochastic order is due to the probability integral transform, see Theorem~1.A.1 of \cite{shaked2007}).
\end{proof}

Theorem \ref{teo} demonstrates the practical usefulness of our Definition~\ref{general}. In fact, transform orders may sometimes not be meaningful, especially when choosing an unusual reference class $\mathcal{C}$. However, they may be employed merely as tools for deriving stochastic properties of statistical functionals. For instance, to apply part 1 of Theorem~\ref{teo}, it is enough to show that the functional of interest $T$ belongs to some isotonic family, $\mathcal{I}(\geq^e_{\mathcal{C}})$, and to do so, one may use existing results which ensure that $T\in\mathcal{I}(\geq^e_{\mathcal{C}})$. In particular, as we discuss in the next section, this approach works when $\mathcal{C}$ is the class of functions that defines the star order or the convex transform order, enabling the determination of the stochastic monotonicity of the main measures of inequality and skewness. Obviously, if $T$ is isotonic with more than one of the orders in Definition~\ref{general}, we should use the weakest one, in order to enlarge the range of applicability of the result. On the other hand, it might happen that our functional $T$ is not isotonic with known orders. In this case, we may try to define a new \textit{ad hoc} transform order such that $T\in\mathcal{I}(\geq^e_{\mathcal{C}})$, for instance by choosing a wider class $\mathcal{C}$ or by defining $\mathcal{C}$ according to the properties of $T$. Alternatively, we may apply part 2 of Theorem~\ref{teo}, replacing the isotonicity assumption, $T\in\mathcal{I}(\geq_\mathcal{C}^e)$, with  the weaker order-preserving property $T(F_n^*)\geq T(G_n)$.

Both approaches may be useful, depending on the situation. Several results available in literature assert the isotonicity of many popular probability functionals, with respect to some of the transform orders mentioned above, hence part 1 immediately provides the ordering between the corresponding statistical functionals, as discussed in the following section. When such isotonicity results are not available, part 2 offers a weaker order-preserving assumption, allowing for the same conclusion. This latter approach will be explored in Section~\ref{sec:tests}.

\section{Stochastic behavior of inequality measures}
\label{sec:3}
We now consider some important families of statistical functionals that are commonly employed in several fields, including statistics, economics and finance. It should be noted that functionals that are isotonic with the usual stochastic order are generally seen as \textit{location} functionals. Since every function of the random sample, which is nondecreasing in each argument, has a stochastically increasing behavior \citep[Theorem 1.A.3]{shaked2007}, then the stochastic monotonicity of \textit{location estimators} may be easily derived. Therefore, we focus on some other families of functionals, that are generally not isotonic with the usual stochastic order, namely inequality measures. As discussed later in this section, inequality measures must be isotonic with the star order, so they fit perfectly to our framework. Similarly, notice that isotonicity with the convex transform order is a basic condition for a skewness measure \citep{vanzwet1964,oja,eberl2021}, therefore our method may be also applied to skewness measures, but this is beyond the purpose of this paper.

Hereafter, we focus on non-negative random variables with finite mean. Let $L_F(p)=\int_0^pF^{-1}(t)\,dt/{\mu_X}$, $p\in[0,1]$ be the \textit{Lorenz curve} of $F$ (as usual, $\mu_X$ represents the mean of $X$). The Lorenz curve is a primary tool for representation of inequality (e.g. income inequality), as it is typically understood that the higher of two non-intersecting Lorenz curves shows less inequality. This gives rise to the definition of the \textit{Lorenz order}.
\begin{definition}
We say that $X$ dominates $Y$ with respect to the Lorenz order, denoted by $X\geq_{L}Y$, if $L_F(p)\leq L_G(p)$ for every $p\in[0,1]$.
\end{definition}
Note that the reverse relation ($L_F(p)\geq L_G(p)$) is sometimes used to define the Lorenz order. We use the same notation as in \cite{marshall2011}, that is, $X$ dominates $Y$ if it exhibits higher inequality, as measured by the Lorenz curve. Using standard arguments in the ordering theory, it is possible to derive several classes of probability functionals that are isotonic with $\geq_L$. In particular, any functional $I(X)$ satisfying the following basic properties (see \cite{shorrocks1980}), may be seen as an inequality measure:
\begin{enumerate}
\item $I\in \mathcal{I}(\geq_L)$;
\item $I(X)\geq I(\mu_X)$;
\item $I(aX)=I(X),\;a>0$;
\item $I(X+b)\leq I(X),\;b>0$.
\end{enumerate}
Actually, properties 2--4 are redundant, as they are implied by 1, that is, the Lorenz isotonicity of the inequality measure $I$ is the crucial property. It is well known that the star order implies the Lorenz order \citep{shaked2007}, so that $\mathcal{I}({\geq_{L}})\subset\mathcal{I}(\geq_*)$. However, since the star order is usually defined only in the continuous case, we need the following extension.
\begin{lemma}\label{inclusion}
$\mathcal{I}({\geq_{L}})\subset\mathcal{I}(\geq_*^e)$.
\end{lemma}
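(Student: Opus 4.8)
The plan is to reduce the claimed inclusion to a single implication between orders and then prove that implication directly. Recall the general principle noted earlier: if $F\succ_0 G\implies F\succ_1 G$, then $\mathcal{I}(\succ_1)\subset\mathcal{I}(\succ_0)$. Taking $\succ_0={\geq_*^e}$ and $\succ_1={\geq_L}$, it suffices to show that the extended star order implies the Lorenz order, i.e. $X\geq_*^e Y\implies X\geq_L Y$. This is the natural extension of the well-known continuous fact $X\geq_* Y\implies X\geq_L Y$ (already invoked above to obtain $\mathcal{I}(\geq_L)\subset\mathcal{I}(\geq_*)$), and the whole point is that the proof of that fact survives when $F^{-1}\circ G$ fails to be star-shaped, as long as a genuinely star-shaped function agrees with it on $S_G$.

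First I would extract the transforming function. By Definition~\ref{general}, $X\geq_*^e Y$ provides a non-decreasing, star-shaped $\phi$ with $\phi|_{S_G}=F^{-1}\circ G|_{S_G}$. Since $Y$ takes values in $S_G$ almost surely and $X=_d F^{-1}\circ G(Y)$, this gives $X=_d\phi(Y)$; in particular $L_F=L_{\phi(Y)}$, because the Lorenz curve depends only on the distribution. The claim thus becomes $\phi(Y)\geq_L Y$, that is, a non-decreasing star-shaped transformation never decreases Lorenz inequality.

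Next I would compare the two Lorenz curves through their quantile representations. As $\phi$ is non-decreasing, the quantile function of $\phi(Y)$ is $\phi\circ G^{-1}$, so the desired inequality $L_{\phi(Y)}(p)\leq L_G(p)$ reads
\[
\frac{\int_0^p \phi(G^{-1}(t))\,dt}{\int_0^1 \phi(G^{-1}(t))\,dt}\;\leq\;\frac{\int_0^p G^{-1}(t)\,dt}{\int_0^1 G^{-1}(t)\,dt},\qquad p\in[0,1].
\]
Writing $q(t)=G^{-1}(t)\geq 0$, star-shapedness of $\phi$ means $\phi(x)/x$ is non-decreasing, hence $t\mapsto \phi(q(t))/q(t)$ is non-decreasing; the ratio of the two integrands is therefore monotone. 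The inequality is then an instance of the elementary lemma: if $a,b\geq 0$ and $a/b$ is non-decreasing, then $\int_0^p a/\int_0^1 a\leq \int_0^p b/\int_0^1 b$ for all $p$. I would verify this by setting $D(p)=\int_0^p a\cdot\int_0^1 b-\int_0^p b\cdot\int_0^1 a$, observing $D(0)=D(1)=0$ and $D'(p)=b(p)\bigl(r(p)\int_0^1 b-\int_0^1 a\bigr)$ with $r=a/b$ non-decreasing, so that $D'$ changes sign at most once, from negative to positive; hence $D\leq 0$ on $[0,1]$, which is exactly the required inequality.

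The main obstacle is not the ratio lemma, which is routine, but the reduction step: one must ensure that the extended order genuinely supplies a star-shaped $\phi$ defined on the full relevant range, not merely on the discrete set $S_G$, and that $X=_d\phi(Y)$ even though $F^{-1}\circ G$ itself may be, say, a step function. This is precisely where Definition~\ref{general} does the work, and it is what legitimizes passing from $\geq_*$ to $\geq_*^e$. Some care is also needed at the endpoints of the support and with the normalizing means $\mu_X=\int_0^1\phi(G^{-1}(t))\,dt$ and $\mu_Y$, which must be finite and positive for the quotients to be well defined; these are guaranteed by the standing assumption of non-negative variables with finite mean.
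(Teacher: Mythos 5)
Your second half (the normalized--partial--integral lemma via $D(p)=\int_0^p a\cdot\int_0^1 b-\int_0^p b\cdot\int_0^1 a$, with $D(0)=D(1)=0$ and $D'$ changing sign once) is correct, and it is essentially the same single-crossing argument the paper invokes through Theorem~4.B.4 of Shaked and Shanthikumar. The genuine gap is in your reduction step. You pass from $\phi|_{S_G}=F^{-1}\circ G|_{S_G}$ to ``$X=_d\phi(Y)$, hence the quantile function of $X$ is $\phi\circ G^{-1}$,'' using the identity $X=_d F^{-1}\circ G(Y)$. That identity requires $G(Y)$ to be uniform, i.e.\ $G$ continuous; but the extended order $\geq_*^e$ exists precisely to handle discrete (empirical) distributions, where it fails. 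In fact the implication you set out to prove, $X\geq_*^e Y\implies X\geq_L Y$ for \emph{arbitrary} pairs, is false. Take $Y$ uniform on $\{1,2\}$ and $X$ with $P(X=1)=3/4$, $P(X=2)=1/4$. Then $F^{-1}(G(1))=F^{-1}(1/2)=1$ and $F^{-1}(G(2))=F^{-1}(1)=2$, so $F^{-1}\circ G|_{S_G}$ is the identity and $X\geq_*^e Y$ with $\phi=\mathrm{id}$; yet $\phi(Y)=_dY\neq_d X$, the Lorenz curves cross ($L_F(p)=0.8p>\tfrac23 p=L_G(p)$ for small $p$, while $L_F(0.9)=0.84<L_G(0.9)\approx0.867$), and for the Lorenz-isotonic Gini index $\Gamma(F)=3/20<1/6=\Gamma(G)$. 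Star-shapedness of $\phi$ at the support points controls the quantile ratio only when the atoms of $F$ and $G$ carry matching masses, which your argument never uses.

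The paper's proof avoids this trap by restricting from the outset to empirical pairs $F_n,G_n$ with $n$ atoms of mass $1/n$ each --- the only case actually needed when the lemma feeds into Theorem~\ref{teo} and Corollary~\ref{star}. There, both quantile functions are constant on the same intervals $(\tfrac{i-1}n,\tfrac in]$, so the existence of a star-shaped interpolant (the paper takes $\phi=\tilde F_n^{-1}\circ\tilde G_n$ built from linear interpolators) forces $R=F_n^{-1}/G_n^{-1}$ to be a nondecreasing step function, after which the crossing argument gives $F_n\geq_L G_n$. To repair your proof you would have to restrict to this equal-mass setting and verify there that $F_n^{-1}=\phi\circ G_n^{-1}$ almost everywhere on $(0,1)$ --- a fact that holds because the masses match, not because of the general pushforward identity --- at which point your ratio lemma finishes the argument exactly as the paper's citation of Theorem~4.B.4 does.
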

\begin{proof}
Choose $T\in\mathcal{I}({\geq_{L}})$. The inclusion follows if we prove that $F_n\geq_*^e G_n\implies F_n\geq_L G_n$, as this latter implies that $T(F_n)\geq T(G_n)$. Let $\tilde{F}_n$ and $\tilde{G}_n$ be the linear interpolators of the jump points of $F_n$ and $G_n$, respectively. If $F_n\geq_*^e G_n$, then we can take $\phi=\tilde{F}_n^{-1}\circ\tilde{G}_n$ in Definition~\ref{general}. The function $\phi$ coincides with the linear interpolator of ${F}_n^{-1}\circ{G}_n$, which is star-shaped by construction. Then, since $\tilde{F}_n$, $\tilde{G}_n$ are continuous, $\tilde{F}_n\geq_\ast\tilde{G}_n$ and the ratio ${\tilde{F}_n^{-1}}/{\tilde{G}_n^{-1}}$ is nondecreasing \citep[p. 214]{shaked2007}, the sequence ${{F}_n^{-1}( i/n)}/{{G}_n^{-1}(i/n)}$ is nondecreasing for $i=1,\ldots,n$, whereas ${{F}_n^{-1}(p)}/{{G}_n^{-1}(p)}$ is constant between ${(i-1)}/n$ and ${i}/n$, for $i=2,\ldots,n$. Hence, the function $R={{F}_n^{-1}}/{{G}_n^{-1}}$ is a nondecreasing step function. Without loss of generality, let $\mu_X=\mu_Y=1$. Then $L_{F_n}(p)-L_{G_n}(p)=\int_0^p({F}_n^{-1}(u)-{G}_n^{-1}(u))\,du$. If $R$ is nondecreasing, and the means are equal, the quantile functions must cross, and $R(\cdot)-1$ must have one sign change. However, $R(\cdot)-1$ has the same sign as ${F}_n^{-1}-{G}_n^{-1}$, therefore the argument of Theorem~4.B.4 of \cite{shaked2007} implies the result.
\end{proof}
Now, the following result is an immediate consequence of Theorem \ref{teo} and Lemma \ref{inclusion}.

\begin{corollary}\label{star}
If $I\in \mathcal{I}(\geq_L)$ and $X\geq_*Y$, then $I(\F_n)\geq_{st}I(\G_n)$.
\end{corollary}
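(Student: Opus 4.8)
The plan is to combine the two results that immediately precede the statement, namely Lemma~\ref{inclusion} and part~1 of Theorem~\ref{teo}, so this should be a very short deduction rather than a fresh argument. First I would observe that the hypothesis $I\in\mathcal{I}(\geq_L)$ together with Lemma~\ref{inclusion} places $I$ inside the smaller isotonic class $\mathcal{I}(\geq_*^e)$, since Lemma~\ref{inclusion} asserts exactly the inclusion $\mathcal{I}(\geq_L)\subset\mathcal{I}(\geq_*^e)$. This supplies the isotonicity requirement demanded by part~1 of Theorem~\ref{teo} with the reference class $\mathcal{C}$ taken to be the star-shaped functions, so that $\geq_{\mathcal{C}}^e$ reads $\geq_*^e$.

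Next I would supply the ordering hypothesis that Theorem~\ref{teo} needs on the baseline distributions. Part~1 requires $X\geq_*^e Y$, whereas the corollary assumes the stronger continuous relation $X\geq_* Y$. The bridge is the remark made right after Definition~\ref{general}, that $\geq_{\mathcal{C}}^e$ is a weakening of $\geq_{\mathcal{C}}$, i.e. $X\geq_* Y\implies X\geq_*^e Y$. So the assumption $X\geq_* Y$ of the corollary implies the extended version needed by the theorem.

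With both hypotheses of part~1 of Theorem~\ref{teo} now verified — namely $X\geq_*^e Y$ and $I\in\mathcal{I}(\geq_*^e)$ — the theorem yields directly $I(\F_n)\geq_{st} I(\G_n)$, which is precisely the conclusion of the corollary. I would write this as a one-line chain of implications to keep the deduction transparent.

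I do not expect any genuine obstacle, since all the substantive work has been discharged earlier: the analytic content lives in Lemma~\ref{inclusion} (the passage from star-shapedness of the interpolated composition to the Lorenz comparison of the empirical distributions), while the probabilistic content lives in Theorem~\ref{teo} (the probability-integral-transform argument converting a pointwise inequality over all realizations into a stochastic order). The only point deserving a word of care is to state explicitly which reference class $\mathcal{C}$ is being used, so that the reader sees that $\geq_{\mathcal{C}}^e$ specializes to $\geq_*^e$; this justifies invoking the extended star order rather than the plain one when applying the theorem to the discrete empirical CDFs.
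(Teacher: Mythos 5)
Your proposal is correct and matches the paper's own reasoning exactly: the paper states the corollary as ``an immediate consequence of Theorem~\ref{teo} and Lemma~\ref{inclusion}'', which is precisely your deduction via $\mathcal{I}(\geq_L)\subset\mathcal{I}(\geq_*^e)$, the implication $X\geq_* Y\implies X\geq_*^e Y$, and part~1 of Theorem~\ref{teo} with $\mathcal{C}$ the class of star-shaped functions. Your explicit identification of the reference class $\mathcal{C}$ is a sensible bit of bookkeeping the paper leaves implicit.
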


In the next subsections, we will focus on some particularly relevant families of inequality measures.

\subsection{Expectation-type transformations}
It is well known that $X\geq_{L}Y$ if and only if $\E(\phi( X/{\mu_X}))\geq \E(\phi( Y/{\mu_Y}))$, for every convex function $\phi$ \citep{marshall2011}. Therefore, any functional of the form
\begin{equation*}
T_\phi(F)=\E\left(\phi\left(\tfrac X{\mu_X}\right)\right)=\int_0^\infty\phi\left(\tfrac x{\mu_X}\right)\,dF(x)= \int_0^1\phi\left(\tfrac{F^{-1}(p)}{\mu_X}\right)\,dp,
\end{equation*}
where $\phi$ is convex, is isotonic with $\geq_L$, as it is easily seen to satisfy the properties of inequality measures \citep{metron}. Now, the corresponding statistical functional is $T_\phi(\F_n)=\sum \phi({X_i}/{\overline{X}_n})/n$, where $X_1,\ldots,X_n$ is a random sample from $X$ and $\overline{X}_n$ is the sample mean. Several well-known indices belong to this general family, among which we may note the class of \textit{generalized entropy}, or \textit{additively decomposable} measures of inequality \citep{shorrocks1980,shorrocks}, obtained by setting $\phi_r(x)={x^r}/({r(r-1)})$, $r\neq0,1$, $\phi_0(x)=-\log x$, or $\phi_1(x)=x\log x$, respectively, which yield
\begin{equation*}
I_r(\F_n)=\begin{cases}\frac{1}{r(r-1)n} \sum (\frac{X_i}{\overline{X}_n})^r&r\neq0,1\\
\frac{1}{n} \sum \log(\frac{\overline{X}_n}{X_i})&r=0\\
\frac{1}{n} \sum \frac{X_i}{\overline{X}_n}\log(\frac{X_i}{\overline{X}_n})&r=1.
\end{cases}
\end{equation*}
This class gives, for $r=1$, the Theil index, that is, a shifted version of the Shannon's entropy measure, applied to the quantities ${X_i}/({n\overline{X}_n})$ instead of probabilities; for $r\in(0,1]$, a monotonic transformation of the Atkinson's class \citep{atkinson1970}; and, for $r=2$, a simple transformation of the coefficient of variation $CV_n$, that is, $(2I_2)^{1/2}=CV_n$. Moreover, when $\phi(x)=|x-1|$, we obtain the relative mean absolute deviation $T_\phi(\F_n)=\sum |X_i-\overline{X}_n|/({n\overline{X}_n})$. Then, the following result is an immediate consequence of Corollary \ref{star}.
\begin{corollary}\label{cor1}
If $X\geq_*Y$, then $\sum \phi({X_i}/{\overline{X}_n})/n\geq_{st}\sum \phi({Y_i}/{\overline{Y}_n})/n$, for every convex function $\phi$.
\end{corollary}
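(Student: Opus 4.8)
The plan is to recognize Corollary~\ref{cor1} as a direct specialization of Corollary~\ref{star}, so the entire proof reduces to identifying the correct inequality functional $I$ and verifying that it lands in $\mathcal{I}(\geq_L)$. First I would fix a convex function $\phi$ and define the probability functional $T_\phi(F)=\E(\phi(X/\mu_X))=\int_0^1\phi(F^{-1}(p)/\mu_X)\,dp$, as already displayed in the text preceding the statement. The goal is then to show $T_\phi\in\mathcal{I}(\geq_L)$, i.e.\ that $\phi$ convex forces $T_\phi$ to be isotonic with respect to the Lorenz order.

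The key step is the scale-invariant characterization of the Lorenz order quoted at the start of the subsection: $X\geq_L Y$ if and only if $\E(\phi(X/\mu_X))\geq\E(\phi(Y/\mu_Y))$ for every convex $\phi$ \citep{marshall2011}. From this characterization the isotonicity is immediate: if $F\geq_L G$, then by definition $T_\phi(F)=\E(\phi(X/\mu_X))\geq\E(\phi(Y/\mu_Y))=T_\phi(G)$, so indeed $T_\phi\in\mathcal{I}(\geq_L)$ for each fixed convex $\phi$. I would also note in passing, as the surrounding text does, that $T_\phi$ satisfies the defining properties 1--4 of an inequality measure, with property~1 being the one that matters here.

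Having established $T_\phi\in\mathcal{I}(\geq_L)$, I would invoke Corollary~\ref{star} verbatim: with $I=T_\phi$ and the hypothesis $X\geq_* Y$, it yields $T_\phi(\F_n)\geq_{st}T_\phi(\G_n)$. The final step is purely notational: I would compute the statistical functional explicitly as $T_\phi(\F_n)=\frac1n\sum\phi(X_i/\overline{X}_n)$, using that $\F_n$ places mass $1/n$ at each sample point $X_i$ and that the plug-in estimator of $\mu_X$ is the sample mean $\overline{X}_n$. Writing out $T_\phi(\G_n)=\frac1n\sum\phi(Y_i/\overline{Y}_n)$ analogously, the conclusion $\frac1n\sum\phi(X_i/\overline{X}_n)\geq_{st}\frac1n\sum\phi(Y_i/\overline{Y}_n)$ is exactly the claimed statement.

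Since every ingredient is already assembled, there is no genuine obstacle; the only point requiring care is the quantifier structure. The characterization of $\geq_L$ ranges over \emph{all} convex $\phi$ simultaneously, but here I only need it for the single fixed $\phi$ appearing in the statement, so one direction of the ``if and only if'' suffices and no subtlety arises. The mild remaining concern is integrability, namely ensuring $T_\phi(F)$ is well defined; restricting, as the section does, to non-negative random variables with finite mean and noting that the empirical functional is a finite sum makes this automatic in the cases of interest.
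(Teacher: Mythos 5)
Your proposal is correct and follows exactly the paper's route: establish $T_\phi\in\mathcal{I}(\geq_L)$ via the characterization $X\geq_L Y\iff\E(\phi(X/\mu_X))\geq\E(\phi(Y/\mu_Y))$ for all convex $\phi$, then apply Corollary~\ref{star} and identify $T_\phi(\F_n)=\frac1n\sum\phi(X_i/\overline{X}_n)$. Your added remarks on the quantifier direction and integrability are sound but not needed beyond what the paper's one-line derivation already assumes.
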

Notice that \cite{greenwood} proved that the Greenwood statistic has a stochastic increasing behavior with respect to the star order. However, since the Greenwood statistic is a transformation of the coefficient of variation, namely ${(1+CV_n^2)}/n$, Theorem~1 of \cite{greenwood} follows as a consequence of Corollary \ref{cor1}.

\subsection{Distorted expectations}
Let $H$ be a \textit{distortion function}, that is a non-decreasing function on the unit interval, such that $H(0)=0$ and $H(1)=1$, and let $\tilde{H}(p)=1-H(1-p)$ be the corresponding dual distortion function. Probability functionals of the form
\begin{equation*}
\E_H(F)=\frac1{\mu_X}\int_0^\infty x\,dH\circ F(x)=\frac1{\mu_X}\int_0^1 F^{-1}(p)\,dH(p)=\frac1{\mu_X}\int_0^\infty\tilde{H}(1-F(x))\,dx,
\end{equation*}
are generally referred to as distorted expectations, distortion risk measures, or Gini-type functionals \citep{wang1998,muliere1989,landodist}. It can be seen that $X\geq_{L}Y$ if and only if $\E_H(F)\leq (\geq) \E_H(G)$, for every concave (convex) distortion $H$. Then, $\E_H$ is antitonic with respect to $\geq_L$, provided that $H$ is concave. If we denote by $X_{(1)},\ldots,X_{(n)}$ the order statistics of a random sample from $X$, the corresponding statistical functional is
\begin{equation*}
\E_H(\F_n)=\frac1{\overline{X}_n}\sum{X_{(i)}}\int^{\frac in}_{\frac{i-1}n}\,dH(p)
=\frac1{{\overline{X}_n}}\sum{X_{(i)}}\left[H\left(\tfrac in\right)-H\left(\tfrac{i-1}n\right)\right].
\end{equation*}
Such linear combinations of order statistics are generally referred to as \textit{L-statistics} \citep{serfling}. Several important indices belong to this family. For instance, by choosing $H(p)=1-(1-p)^k$, $k\geq1$, we obtain the class of generalized Gini indices $\Gamma_H=1-\E_H$, introduced by \cite{donaldson}. In particular, the classic Gini coefficient of inequality \citep{gini} is given by $\Gamma=\Gamma_H$, with ${H}(p)=1-(1-p)^2$, that is
\begin{equation*}
\Gamma(\F_n)=1-\frac1{n^2{\overline{X}_n}}\sum{X_{(i)}}(2n-2i+1)=1-2\int_0^1L_{\F_n}(p)\,dp=\frac{\sum_{i=1}^n\sum_{j=1}^n|X_{(i)}-X_{(j)}|}{2n^2{\overline{X}_n}}.
\end{equation*}
As for the expressions above, we recall that there are several alternative ways to represent $\Gamma$ (see Chapter 1 of \cite{shlomo}).
For example, note that, if $k$ is a positive integer, $\int_0^\infty(1-F(x))^k\,dx=\E(\min(X_1,\ldots,X_k))$, so that $\Gamma(F)=1-{\E(\min(X_1,X_2))}/{\mu_X}$.

Similarly, the functional $\int_0^\infty F^{-1}(p)h(p)\,dp/{\mu_X}$, where $h$ is non-decreasing and such that $H(p)=\int_0^ph(t)\,dt$, $p\in[0,1]$, is isotonic with respect to $\geq_L$. If we set $w(p)=h(p)-1$, $p\in[0,1]$, without loss of generality in terms of isotonicity, we obtain the family of \textit{linear inequality measures}
\begin{equation*}
\tilde{\Gamma}_w(F)=\frac 1{\mu_X}\int_0^1 F^{-1}(p)w(p)\,dp,
\end{equation*}
studied by \cite{mehran}. In particular, it can be shown that the weight function $w(p)=2p-1$ yields again the Gini coefficient, whereas $w(p)=0$, $p\in(0,1)$, $w(0)=-1$, $w(1)=1$ gives the relative range $\tilde{\Gamma}_w(\F_n)= (X_{(n)}-X_{(1)})/{\overline{X}_n}$. It is easy to see that $\Gamma_H$ and $\tilde{\Gamma}_w$, as probability functionals, satisfy the four properties of inequality measures discussed earlier. Again, Corollary~\ref{star} gives the following {result}.
\begin{corollary}\label{cor2}
If $X\geq_*Y$, then $\Gamma_H(\F_n)\geq_{st} \Gamma_H(\G_n)$, for every concave distortion function $H$, and $\tilde{\Gamma}_w(\F_n){\geq_{st}} \tilde{\Gamma}_w(\G_n)$, for every non-decreasing weight function $w$ on $[0,1]$ such that $\int_0^1w(p)\,dp=0$.
\end{corollary}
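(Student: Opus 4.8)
The plan is to reduce everything to Corollary~\ref{star}. That corollary already converts Lorenz-isotonicity of a functional $I$, together with the continuous hypothesis $X\geq_* Y$, into the stochastic dominance $I(\F_n)\geq_{st} I(\G_n)$; in particular the passage to the discrete empirical distributions is packaged inside it, via Lemma~\ref{inclusion} and Theorem~\ref{teo}. Hence the only thing I need to check is that, for a concave distortion $H$, the functional $\Gamma_H$ lies in $\mathcal{I}(\geq_L)$, and that, for a non-decreasing $w$ with $\int_0^1 w(p)\,dp=0$, the functional $\tilde{\Gamma}_w$ lies in $\mathcal{I}(\geq_L)$.

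For the first family I would invoke the characterization recalled above, namely that $X\geq_L Y$ holds if and only if $\E_H(F)\leq\E_H(G)$ for every concave distortion $H$. Thus, for fixed concave $H$, the map $\E_H$ is antitonic with respect to $\geq_L$; since $\Gamma_H=1-\E_H$, reversing the inequality gives $\Gamma_H\in\mathcal{I}(\geq_L)$ at once, and Corollary~\ref{star} then yields $\Gamma_H(\F_n)\geq_{st}\Gamma_H(\G_n)$.

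For the second family I would set $h=w+1$. Monotonicity of $w$ is equivalent to monotonicity of $h$, and the constraint $\int_0^1 w(p)\,dp=0$ reads exactly $\int_0^1 h(p)\,dp=1$, so that $H(p)=\int_0^p h(t)\,dt$ is a genuine distortion function with non-decreasing density, i.e. a convex distortion. Since $\frac1{\mu_X}\int_0^1 F^{-1}(p)\,dp=1$, one has $\tilde{\Gamma}_w(F)=\E_H(F)-1$, which is, up to the additive constant $-1$, a distorted expectation driven by a convex $H$; by the convex form of the same characterization such a functional is isotonic with respect to $\geq_L$, and the additive constant is immaterial. Equivalently, and perhaps more transparently, integration by parts using $L_F'(p)=F^{-1}(p)/\mu_X$, $L_F(0)=0$ and $L_F(1)=1$ gives $\tilde{\Gamma}_w(F)=w(1)-\int_0^1 L_F(p)\,dw(p)$; because $w$ is non-decreasing, $dw$ is a non-negative measure, so $X\geq_L Y$ (that is, $L_F\leq L_G$ pointwise) forces $\tilde{\Gamma}_w(F)\geq\tilde{\Gamma}_w(G)$, hence $\tilde{\Gamma}_w\in\mathcal{I}(\geq_L)$. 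A final application of Corollary~\ref{star} produces $\tilde{\Gamma}_w(\F_n)\geq_{st}\tilde{\Gamma}_w(\G_n)$.

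The main, and indeed essentially the only, obstacle is the bookkeeping in the second family: checking that $h=w+1$ is a bona fide non-decreasing density integrating to one, so that the Lorenz-characterization for convex distortions genuinely applies, and confirming that the normalizing term $w(1)$ (equivalently the constant $-1$) plays no role in the ordering. Everything else is a direct appeal to the characterizations of $\geq_L$ recalled in the text and to Corollary~\ref{star}.
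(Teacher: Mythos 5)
Your proposal is correct and follows essentially the same route as the paper, which proves this corollary implicitly by noting in the surrounding text that $\Gamma_H$ and $\tilde{\Gamma}_w$ belong to $\mathcal{I}(\geq_L)$ (via the concave/convex distortion characterization of the Lorenz order and the isotonicity of $\int_0^1 F^{-1}(p)h(p)\,dp/\mu_X$ for non-decreasing $h$) and then appealing directly to Corollary~\ref{star}. One minor caveat: $h=w+1$ need not be non-negative, so $H(p)=\int_0^p h(t)\,dt$ may fail to be a genuine (non-decreasing) distortion function; your integration-by-parts argument, $\tilde{\Gamma}_w(F)=w(1)-\int_0^1 L_F(p)\,dw(p)$ with $dw\geq 0$, sidesteps this and is the correct justification for that family.
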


\subsection{Some useful remarks}
\label{sec:useful}
One may wonder if, in Theorem \ref{teo}, a transform order may be replaced by a different kind of order $\succ$, that is, if, in general, $F\succ G$ and $T\in\mathcal{I}(\succ)$ imply $T(\F_n)\geq_{st}T(\G_n)$, where $\succ$ is not a transform order. For instance, can one replace $\geq_*$ by $\geq_L$ in Corollary \ref{star}? More generally, one may even wonder if the stochastic ordering assumption between the baseline distributions could be relaxed: for instance, would $T(F)\geq T(G)$ suffices to obtain $T(\F_n)\geq_{st}T(\G_n)$? (This would also mean that $T(F)= T(G)$ implies $T(\F_n)=_{d}T(\G_n)$). The following counterexample provides a negative answer to these conjectures.

Let $X$ and $Y$ be discrete random variables with uniform distributions on the supports $\{1, 3.5, 6, 6.5, 9, 11\}$ and $\{2, 3, 5, 7, 7.5, 10\}$, respectively.
It is easily seen that $F\geq_L G$ and {the Gini indices are such that} $\Gamma(F)>\Gamma(G)$, but $F\not\geq_*^e G$. Note that we focus on discrete distributions, because, using continuous parametric models, it is difficult to find instances such that $F\geq_L G$ and $F\not\geq_*^e G$. In the discrete case, the distributions of $\Gamma(\F_n)$ and $\Gamma(\G_n)$ are also discrete, with finite support included in the unit interval. Therefore, we may obtain a precise approximation of these distributions, for small sample sizes, by using a large number of simulation runs. An approximate representation of the CDFs of $\Gamma(\F_n)$ and $\Gamma(\G_n)$, based on one million random samples of size $n = 3$ from $F$ and $G$, is shown in Figure~\ref{f1}. The functions are clearly crossing, hence $\Gamma(\F_n)\not\geq_{st} \Gamma(\G_n)$. In particular, we observe that these functions exhibit some ``bumps'' within different intervals, and this represents an obstruction for the dominance relation. When the distributions are star ordered ($F\geq_\ast^e G$), such bumps occur within some overlapping intervals, indeed the dominance relation is guaranteed by Corollary~\ref{star}. We obtained a similar behavior for other small sample sizes, such as $n=4,5$. This result is quite unexpected, however, we may conclude that both conjectures above are false. In particular, the behavior of the functionals is not ``under control'' if the sample size is small, whereas we know that, for large sample sizes, under some conditions (which are satisfied for probability functionals of the form $\int t(x)\,dF(x)$, such as transformed and distorted expectations), statistical functionals converge to the constant value $T(F)$. Therefore, transform orders provide suitable tools for controlling the stochastic behavior of statistical functionals.

\begin{figure}
\centering
\includegraphics{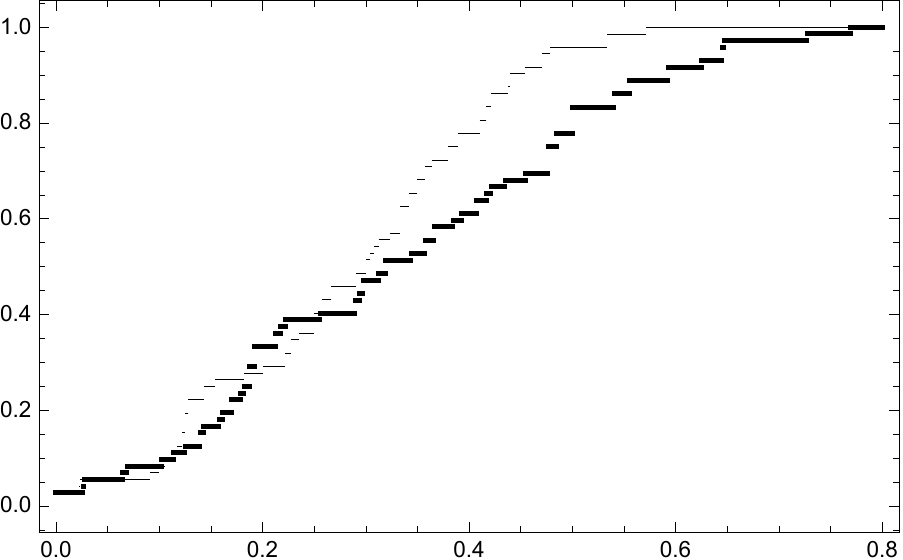}
\caption{CDFs of the Gini coefficient, generated by {$10^6$} samples from $F$ (thick) and $G$ (thin).\label{f1}}
\end{figure}

\section{Application to goodness-of-fit tests}
\label{sec:tests}
In the statistical literature, tests of the null hypothesis $\mathcal{H}^R_0:G^{-1}\circ F|_{R}\in\mathcal{C}$, versus the alternative $\mathcal{H}^R_1:G^{-1}\circ F|_{R}\notin\mathcal{C}$, where $R\subset S_F$ is some interval, $F$ is the distribution of interest and $G$ is known, may be particularly interesting. Hereafter, we will focus on the case in which $G(0)=0$, to avoid some technical issues. Given $G$, $G^{-1}\circ F$ is called the \textit{generalized hazard function} of $F$ \citep{barlow1971}. Tests for $G\geq_c F$ and $G\geq_* F$ have several applications: for example, by choosing $G(x)=\mathcal{E}(x):=1-\exp(-x)$, $x\geq0$, namely, the unit exponential distribution, we obtain tests for the \textit{increasing hazard rate} (IHR) and the \textit{increasing hazard rate average} (IHRA) properties, respectively, which are fundamental tools for survival analysis and reliability theory \citep{marshall2007}. {Non parametric tests of these properties have been studied extensively in the literature, most of which deal with the null hypothesis of exponentiality ($F=\mathcal{E}$) versus the alternative that $F$ is IHR \citep{proschan1967,barlow1969,bickel1969,bickeldoksum} or IHRA \citep{deshpande,klefsjo,kochar1985testing,link,wells,ahmad}, but not exponential. In such cases, it is clear that the distribution of the test statistic can be obtained using $F=\mathcal{E}$. Differently, we study goodness-of-fit tests where the determination of the distribution of the test statistic under the (composite) null hypothesis is not obvious and it represents an important issue \citep{tenga1984,hall2005,durot2007,groeneboom2012,gibels,lando2021ssd,landotest}. As shown in the sequel, part 2 of Theorem \ref{teo} may be used to establish stochastic ordering properties of test statistics under some circumstances of interest. These results make it possible to control the behavior of the test for a given sample size, which is an important aspect, comparable to asymptotic properties.}

{In the next subsections, we shall remove the restriction $R$ and just consider $\mathcal{H}_0:G^{-1}\circ F\in\mathcal{C}$ {versus $\mathcal{H}_1:G^{-1}\circ F\notin\mathcal{C}$}, to simplify the notations. It is easily seen that the results in Subsections~\ref{subsec4.1} and \ref{subsec4.2} hold even in the restricted case. Differently, a restricted null hypothesis is considered in Subsection~\ref{subsub:IHRA}.}

\subsection{Testing convexity}
\label{subsec4.1}
Define the \textit{greatest convex minorant} of a function $\phi$ as the largest convex function $\phi_c$ that does not exceed $\phi$ (see \cite{tenga1984} for {a formal definition} of this function). An intuitive way of testing $\mathcal{H}_0:G\geq_c F$ {versus $\mathcal{H}_1:G\not\geq_c F$ }consists in measuring the distance between $G^{-1} \!\circ\! \F_n$ and its greatest convex minorant $(G^{-1} \!\circ\! \F_n)_{c}$, {or, more generally,}
\begin{multline*}
T^{{K}}_c(\F_n)=\sup_{i=2,\ldots,n-1} \left|K\circ G^{-1} \!\circ\! \F_n(X_{(i-1)})-K\circ(G^{-1} \!\circ\! \F_n)_{c}(X_{(i)})\right|\\
=\sup_{i=2,\ldots,n-1} \left(K\circ G^{-1} (\tfrac{i-1}n)-K\circ(G^{-1} \!\circ\! \F_n)_{c}(X_{(i)})\right),
\end{multline*}
where {$K$ is some strictly increasing transformation defined on $S_G$. This family of tests is quite general. Some interesting examples are obtained by choosing $K$ as the identity function \citep{tenga1984,durot2007,lando2021ssd}; or setting $K=G$ \citep{groeneboom2012,landotest}, in this latter case $T^K_c(\F_n)$ boils down to a Kolmogorov-Smirnov type test statistic. It can be seen that $T_c^K$ is scale invariant, which is a basic condition for tests of this kind. Note that the difference is evaluated among the shifted points $X_{(i)}$ and $X_{(i-1)}$ because, by construction, $G^{-1} \!\circ\! \F_n(X_{(i-1)})\geq (G^{-1} \!\circ\! \F_n)_{c}(X_{(i)})$}. A weighted version of $T^K_c(\F_n)$ may also be considered without loss of generality. Of course, since $\mathcal{H}_0$ is a composite hypothesis, the determination of the \textit{least favorable distribution} of $T^K_c(\F_n)$ under $\mathcal{H}_0$ is especially critical. With regard to the IHR hypothesis, this distribution is indeed obtained by simulating from the unit exponential, as proved by \cite{tenga1984}. We remark that a more general result may be achieved as an application of Theorem~\ref{teo}. As expected, the least favorable distribution of $T^K_c(\F_n)$ under $\mathcal{H}_0:G\geq_c F$, is determined just by simulating from $G$. Let $\Phi^G_p(F)=(G^{-1}\circ F)_c(F^{-1}(p))$. The following lemma is a consequence of Theorem 2.2 of \cite{tenga1984}, and it establishes the order-reversing behavior of $\Phi^G_p$.

\begin{lemma}\label{lemma}
{Assume that} $G\geq_c F$. Then $\Phi^G_{i/n}(G_n)\leq \Phi^G_{i/n}(F^*_n)$, for $i=1,\ldots,n$.
\end{lemma}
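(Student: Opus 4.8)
The plan is to exploit the fact that, in the construction preceding Theorem~\ref{teo}, the two empirical objects are linked by an explicit change of variable which is exactly the map whose inverse is convex under $\mathcal{H}_0$. Write $\psi=F^{-1}\circ G$ for the transformation sending the $G$-sample $y_1,\dots,y_n$ (with empirical $G_n$) to the $F$-sample $x_i^\ast=\psi(y_i)$ (with empirical $F_n^\ast$), and note that $\psi^{-1}=G^{-1}\circ F$, so that the hypothesis $G\geq_c F$ says precisely that $\psi^{-1}$ is convex on the relevant range (equivalently, $\psi$ is concave). Since $\psi$ is increasing and $x_i^\ast=\psi(y_i)$, a direct count gives the identity $F_n^\ast=G_n\circ\psi^{-1}$, and hence
\begin{equation*}
G^{-1}\circ F_n^\ast = (G^{-1}\circ G_n)\circ\psi^{-1}.
\end{equation*}
Writing $f:=G^{-1}\circ G_n$, the lemma reduces to comparing the greatest convex minorant of $f$ with that of $f\circ\psi^{-1}$ at the corresponding nodes.

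The second step is a general commutation inequality between greatest convex minorants and convex reparametrizations of the abscissa. I would argue that $(f)_c\circ\psi^{-1}$ is itself a convex minorant of $f\circ\psi^{-1}$: it lies below $f\circ\psi^{-1}$ because $(f)_c\leq f$ pointwise, and it is convex because $(f)_c$ is convex and \emph{nondecreasing} while $\psi^{-1}$ is convex (the composition of a nondecreasing convex function with a convex function is convex). Since $(f\circ\psi^{-1})_c$ is by definition the greatest convex minorant of $f\circ\psi^{-1}$, this yields $(f)_c\circ\psi^{-1}\leq (f\circ\psi^{-1})_c$ throughout the common domain.

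It then remains to evaluate at the right point. Because $G_n^{-1}(i/n)=y_{(i)}$ and $(F_n^\ast)^{-1}(i/n)=x_{(i)}^\ast=\psi(y_{(i)})$, substituting $x=\psi(y_{(i)})$ into the previous inequality gives
\begin{equation*}
\Phi^G_{i/n}(G_n)=(f)_c(y_{(i)})=((f)_c\circ\psi^{-1})(\psi(y_{(i)}))\leq (f\circ\psi^{-1})_c(\psi(y_{(i)}))=\Phi^G_{i/n}(F_n^\ast),
\end{equation*}
which is exactly the claim.

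The main obstacle I anticipate is not the algebra but justifying the monotonicity and domain facts that make the composition argument legitimate. First, one must check that $(f)_c$ is nondecreasing; this follows from $f=G^{-1}\circ G_n$ being nondecreasing, but it should be stated carefully since the convexity of $(f)_c\circ\psi^{-1}$ relies on the outer function being nondecreasing. Second, one must ensure the two greatest convex minorants are taken over matching domains, so that $\psi^{-1}$ is a genuine increasing bijection between them and the contact structure at the nodes is preserved; here it is convenient to recall, as already observed before Lemma~\ref{lemma}, that $(G^{-1}\circ G_n)_c$ evaluated at $y_{(i)}$ is controlled by the value $G^{-1}((i-1)/n)$ sitting just to its left, which pins the evaluation to the lower-left corners of the step function and confirms that the two minorants are supported on point sets with identical ordinates and abscissas related by $\psi$. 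I would finally note that the same conclusion can be read off from Theorem~2.2 of \cite{tenga1984}, but the self-contained argument above makes transparent \emph{why} the convexity of $\psi^{-1}=G^{-1}\circ F$ (equivalently $G\geq_c F$) is exactly the property that reverses the order of $\Phi^G_{i/n}$.
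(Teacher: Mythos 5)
Your proof is correct, but it takes a genuinely different route from the paper: the paper does not prove Lemma~\ref{lemma} at all, instead obtaining it as a consequence of Theorem~2.2 of \cite{tenga1984}. Your argument is self-contained and rests on a clean commutation inequality: writing $f=G^{-1}\circ G_n$ and $\psi=F^{-1}\circ G$, the change-of-variable identity $F_n^\ast=G_n\circ\psi^{-1}$ gives $G^{-1}\circ F_n^\ast=f\circ\psi^{-1}$, and since $(f)_c\circ\psi^{-1}$ is a convex minorant of $f\circ\psi^{-1}$, one gets $(f)_c\circ\psi^{-1}\leq\bigl(f\circ\psi^{-1}\bigr)_c$ and concludes by evaluating at $x^\ast_{(i)}=\psi(y_{(i)})$. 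The two auxiliary facts you flag do hold: the greatest convex minorant of a nondecreasing $f$ on an interval with left endpoint $a$ is nondecreasing (if $c$ is a convex minorant then so is $\max\{c,c(a)\}$, since $c(a)\leq f(a)\leq f(x)$, so the greatest one cannot strictly decrease), and a nondecreasing convex function composed with a convex function is convex. The only step deserving slightly more care than you give it is $\psi^{-1}\circ\psi(y_{(i)})=y_{(i)}$, together with $F_n^\ast=G_n\circ\psi^{-1}$ itself: both require $\psi$ to be continuous and strictly increasing on the relevant range, which could fail for generalized inverses in the presence of atoms or flat pieces (and if $\psi^{-1}\circ\psi(y_{(i)})<y_{(i)}$ your chain of inequalities would point the wrong way). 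In the setting of Section~\ref{sec:tests}, however, $G$ is continuous and strictly increasing and $\psi^{-1}=G^{-1}\circ F$ is convex, hence continuous on the interior of its domain, so this is a regularity caveat rather than a gap. What your approach buys beyond the paper's citation is transparency and generality: replacing ``convex'' by ``star-shaped'' throughout, the same commutation inequality holds, because for $g$ star-shaped and nondecreasing and $h$ star-shaped one has $g(h(\lambda x))\leq g(\lambda h(x))\leq\lambda g(h(x))$ for $\lambda\in[0,1]$; this proves Lemma~\ref{lemma2} as well, which the paper instead establishes by explicit slope computations for the greatest star-shaped minorant. Your argument thus isolates the structural reason both lemmas hold, namely that the class $\mathcal{C}$ is closed under composition with its nondecreasing members, exactly in the spirit of the paper's general transform-order framework.
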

Theorem \ref{teo} and Lemma \ref{lemma} imply the following result. Note that this is a slight generalization of existing results \citep{tenga1984,lando2021ssd}, however, here we show it to illustrate our method, as an immediate application of part 2 of Theorem~\ref{teo}.

\begin{proposition}\label{test}
Under $\mathcal{H}_0:G\geq_c F$, $T^K_c(\G_n)\geq_{st}T^K_c(\F_n)$.
\end{proposition}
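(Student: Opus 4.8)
The plan is to apply part~2 of Theorem~\ref{teo} with $\mathcal{C}$ taken to be the (extended) class of convex functions, so that $\geq_\mathcal{C}^e$ is the extended convex transform order $\geq_c^e$. The first thing to settle is the orientation. Since the desired conclusion $T^K_c(\G_n)\geq_{st}T^K_c(\F_n)$ places $G$ (not $F$) in the dominating slot, I would invoke the theorem with the two baseline distributions interchanged relative to its default statement, i.e. with $G$ playing the part of the distribution whose empirical functional ends up stochastically larger. Under $\mathcal{H}_0:G\geq_c F$ the composition $G^{-1}\circ F|_{S_F}$ is convex, hence $G\geq_c^e F$, which supplies the order hypothesis. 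It then remains to check the order-preserving condition of part~2: that $T^K_c(G^*_n)\geq T^K_c(F_n)$ for every matched pair consisting of a realization $F_n$ from the $F$-sample and its image $G^*_n$ under $G^{-1}\circ F$.

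The key simplification is to rewrite $T^K_c$ so as to isolate the only realization-dependent quantity. Using $\F_n(X_{(i-1)})=(i-1)/n$ and $X_{(i)}=\F_n^{-1}(i/n)$, the functional reads
\[
T^K_c(\F_n)=\sup_{i=2,\ldots,n-1}\left(K\circ G^{-1}\!\left(\tfrac{i-1}{n}\right)-K\!\left(\Phi^G_{i/n}(\F_n)\right)\right),
\]
with $\Phi^G_p(F)=(G^{-1}\circ F)_c(F^{-1}(p))$. The term $K\circ G^{-1}((i-1)/n)$ depends only on $i,n,G,K$ and is therefore identical whichever $n$-point empirical CDF is substituted; all dependence on the realization is carried by $K(\Phi^G_{i/n}(\cdot))$. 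Because $K$ is strictly increasing, a termwise inequality $\Phi^G_{i/n}(G^*_n)\leq\Phi^G_{i/n}(F_n)$ for each $i$ gives $K\circ G^{-1}((i-1)/n)-K(\Phi^G_{i/n}(G^*_n))\geq K\circ G^{-1}((i-1)/n)-K(\Phi^G_{i/n}(F_n))$ for each $i$; taking suprema then yields $T^K_c(G^*_n)\geq T^K_c(F_n)$.

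The required termwise comparison of $\Phi^G$ values—that the $G$-type realization produces the smaller value—is precisely the order-reversing statement of Lemma~\ref{lemma} (which holds for $i=1,\ldots,n$, hence a fortiori on the range $i=2,\ldots,n-1$ of the supremum). Feeding this into the reduction above verifies the order-preserving condition, and part~2 of Theorem~\ref{teo} upgrades this realization-by-realization inequality to the stochastic dominance $T^K_c(\G_n)\geq_{st}T^K_c(\F_n)$, which is the assertion of the proposition.

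I expect the main obstacle to be bookkeeping rather than a genuine difficulty: one must keep the assignment of $F$ and $G$ to the two slots of Theorem~\ref{teo} consistent throughout, so that the direction of the convex-order hypothesis, the direction supplied by Lemma~\ref{lemma}, and the direction of the final stochastic order all agree. The substantive observation that does the real work is that the minorant term $K\circ G^{-1}((i-1)/n)$ is realization-free; once this is noticed, the strict monotonicity of $K$ collapses everything to the single inequality delivered by the lemma, so that no direct analysis of the greatest convex minorant is needed here—that analysis is already packaged inside Lemma~\ref{lemma} via Theorem~2.2 of \cite{tenga1984}.
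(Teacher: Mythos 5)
Your proposal is correct and takes essentially the same route as the paper's own proof: the published argument likewise rewrites $T^K_c(\F_n)=\sup_{i=2,\ldots,n-1}\bigl(K\circ G^{-1}(\tfrac{i-1}{n})-K\circ\Phi^G_{i/n}(\F_n)\bigr)$, feeds the termwise inequality of Lemma~\ref{lemma} through the increasing map $K$ (the minorant-free term being the same for every realization), and concludes via part~2 of Theorem~\ref{teo}. Your explicit bookkeeping about which distribution occupies which slot of the theorem simply spells out the role swap the paper leaves implicit, so there is nothing to fix.
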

\begin{proof}Clearly, by Lemma \ref{lemma}, {$\Phi^G_{ i/n}(G_n)\leq \Phi^G_{ i/n}(F^*_n)$}. Note that
$$T^K_c(\F_n)=\sup_{i=2,\ldots,n-1} \left({K\circ}G^{-1}(\tfrac {i-1}n)-{K\circ}\Phi^G_{\frac in}(\F_n)\right),$$ then Theorem \ref{teo} implies the result{, since $K$ is increasing}.\end{proof}
Therefore, we reject $\mathcal{H}_0$ when $T^K_c(F_n) \geq c_{\alpha,n}$, where $c_{\alpha,n}$ is the solution of $P(T^K_c(\G_n)\geq c_{\alpha,n}) = \alpha${. Proposition \ref{test} ensures that} the \textit{size} of the test { is $\alpha$, that is, under $\mathcal{H}_0$, $P(T^K_c(\F_n)\geq  c_{\alpha,n})\leq \alpha$}. {Moreover, under the alternative ``$G^{-1}\circ F$ is concave'', reversing Proposition \ref{test}, we can similarly obtain $P(T^K_c(\F_n)\geq c_{\alpha,n})\geq P(T^K_c(\G_n)\geq c_{\alpha,n})=\alpha$, that is, the test is \textit{unbiased} under concave alternatives.} The $p$-value of the test is $p=P(T^K_c(\G_n)\geq T^K_c(F_n))$.

As well known, the determination of the least favorable distribution of $T^K_c$ yields conservative tests. In order to improve the power of the test against non-convex alternatives ($\mathcal{H}_1:G\not\geq_c F$), a more modern approach consists in performing bootstrap resampling from the distribution $F^G_n=G\circ(G^{-1}\circ F_n)_c$, that is an isotonic estimate of $F$, instead of sampling from $G$ \citep{groeneboom2012}. In particular, denote with $\hat{\F}^G_{n}$ the (bootstrap) empirical distribution obtained by sampling from $F^G_n$. The following result holds.
\begin{proposition}\label{boot}
Under $\mathcal{H}_0:G\geq_c F$, $T^K_c(\G_n)\geq_{st} T^K_c(\hat{\F}^G_{n})$.
\end{proposition}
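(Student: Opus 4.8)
The plan is to deduce Proposition~\ref{boot} from Proposition~\ref{test} by observing that the resampling distribution $F^G_n=G\circ(G^{-1}\circ F_n)_c$ is, by its very construction, convex-ordered below $G$. First I would condition on the original sample $X_1,\dots,X_n$, so that $F_n$, and hence $F^G_n$, is treated as a fixed distribution. The central identity is
\[
G^{-1}\circ F^G_n=G^{-1}\circ G\circ (G^{-1}\circ F_n)_c=(G^{-1}\circ F_n)_c,
\]
where the second equality uses that $G$ is continuous and strictly increasing on its support, so that $G^{-1}\circ G$ acts as the identity on the range of the greatest convex minorant. Since the greatest convex minorant is convex by definition, $G^{-1}\circ F^G_n$ is convex, whence $G\geq_c F^G_n$ (in the extended sense of Definition~\ref{general} when $F^G_n$ fails to be strictly continuous, and in the ordinary sense otherwise, as the minorant is continuous and piecewise linear). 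Crucially, this holds for \emph{every} realization of the original sample, whether or not $\mathcal{H}_0$ is true; $\mathcal{H}_0$ is retained only to match the testing context.

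Second, I would apply Proposition~\ref{test} with $F^G_n$ playing the role of the baseline distribution. Conditionally on the original sample, $\hat{\F}^G_n$ is precisely the empirical distribution of an i.i.d.\ sample of size $n$ drawn from $F^G_n$, and we have just established $G\geq_c F^G_n$. The conclusion of Proposition~\ref{test} then delivers the conditional stochastic dominance $T^K_c(\G_n)\geq_{st}T^K_c(\hat{\F}^G_n)$, where $\G_n$ is the calibration sample drawn from $G$ independently of the data.

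Third, I would remove the conditioning. Since the inequality $P\bigl(T^K_c(\hat{\F}^G_n)\leq x\mid X_1,\dots,X_n\bigr)\geq P\bigl(T^K_c(\G_n)\leq x\bigr)$ holds pointwise in $x$ for every realization of the sample, and the right-hand side does not depend on the sample, taking expectation over $X_1,\dots,X_n$ preserves it. This yields the unconditional dominance $T^K_c(\G_n)\geq_{st}T^K_c(\hat{\F}^G_n)$ claimed in Proposition~\ref{boot}.

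The main obstacle is the second step: Proposition~\ref{test}, through Lemma~\ref{lemma} and part~2 of Theorem~\ref{teo}, was phrased for a fixed continuous baseline $F$, whereas here $F^G_n$ is data-dependent and, depending on the definition of the minorant, may have flat stretches or a discrete-support contingency. I would therefore need to verify that the greatest-convex-minorant argument behind Lemma~\ref{lemma}, namely the order-reversing inequality for $\Phi^G_{i/n}$, remains valid under $\geq_c^e$ for this target, and that the auxiliary objects $F^{*}_n$, $(G^{-1}\circ\,\cdot\,)_c$, and the values $\Phi^G_{i/n}$ are well-defined when evaluated at $F^G_n$. A secondary point to confirm is that the calibration sample $\G_n$ is generated independently of the bootstrap construction, which is exactly what legitimizes the integration in the final step.
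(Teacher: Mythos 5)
Your proof is correct, but it takes a genuinely different route from the paper's. The paper never leaves the coupling it built for Proposition~\ref{test}: it takes the matched realization $F^*_n$ from Lemma~\ref{lemma}, which gives $\Phi^G_{i/n}(G_n)\leq\Phi^G_{i/n}(F^*_n)$, and extends that chain by one step, using that $F^G_n$ is a minorant of $F^*_n$ (so $(F^*_n)^{-1}\leq(F^G_n)^{-1}$) and that $G^{-1}\circ F^G_n=(G^{-1}\circ F^*_n)_c$ is convex, hence its own greatest convex minorant, to obtain
\begin{equation*}
\Phi^G_{i/n}(G_n)\leq\Phi^G_{i/n}(F^*_n)=(G^{-1}\circ F^*_n)_c\bigl((F^*_n)^{-1}(\tfrac in)\bigr)\leq(G^{-1}\circ F^*_n)_c\bigl((F^G_n)^{-1}(\tfrac in)\bigr)=\Phi^G_{i/n}(F^G_n),
\end{equation*}
after which part 2 of Theorem~\ref{teo} converts this realization-wise inequality into the stochastic dominance, treating $F^G_n$ as the realization of $\hat{\F}^G_{n}$ attached to the matched sample. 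You instead condition on the data, observe that $G\geq_c F^G_n$ holds by construction of the isotonization (since $G^{-1}\circ F^G_n=(G^{-1}\circ F_n)_c$ is convex), apply Proposition~\ref{test} afresh at the bootstrap level with $F^G_n$ as the baseline, and then integrate out the data. Each approach buys something: the paper's chain needs only one new geometric fact beyond Lemma~\ref{lemma} and never has to re-justify that lemma for a data-dependent baseline; but because it couples everything through the original sample, it consumes $\mathcal{H}_0$ (that is exactly where $\Phi^G_{i/n}(G_n)\leq\Phi^G_{i/n}(F^*_n)$ comes from), and, as written, it compares $T^K_c(\G_n)$ with the statistic evaluated at the resampling distribution $F^G_n$ itself, leaving the second, resampling layer of randomness in $\hat{\F}^G_{n}$ implicit. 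Your conditioning argument handles that layer explicitly, and it yields a genuine strengthening the paper's route does not: since $F^G_n$ satisfies the null by construction for every realization, the dominance $T^K_c(\G_n)\geq_{st}T^K_c(\hat{\F}^G_{n})$ holds whether or not $\mathcal{H}_0$ is true, as you correctly point out. The residual verification you flag is benign within the paper's own framework: Lemma~\ref{lemma} is stated for any pair with $G\geq_c F$, and once you condition, $F^G_n$ is a fixed, continuous CDF (a composition of $G$ with a piecewise linear convex function), so neither the extended order $\geq_c^e$ nor any new minorant argument is actually required at that step.
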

\begin{proof}
Let $F^G_n$ be the estimate of $F$ which corresponds to the empirical distribution $F_n^*$. $F_n^G$ is a realization of $\hat{\F}^G_{n}$. Lemma \ref{lemma} yields $\Phi^G_{ i/n}(G_n)\leq \Phi^G_{ i/n}(F^*_n)$, for $i=1,\ldots,n$. Note that $F^G_n$ is a minorant of $F^*_n$ (equivalently, $(F^*_n)^{-1}$ is a minorant of $(F^G_n)^{-1}$). Then, since $G^{-1}\circ F_n^G=(G^{-1}\circ F^*_n)_c$, we obtain
\begin{multline*}\Phi^G_{\frac in}(G_n)\leq\Phi^G_{\frac in}(F^*_n)=(G^{-1}\circ F_n^*)_c((F^*_n)^{-1}(\tfrac in))\\ \leq(G^{-1}\circ F_n^*)_c((F^G_n)^{-1}(\tfrac in))= G^{-1}\circ F_n^G\circ(F^G_n)^{-1}(\tfrac in)=\Phi^G_{\frac in}(F^G_n).\end{multline*}
Hence, as shown in the proof of Proposition \ref{test}, we obtain that $ T^K_c(\G_n)\geq_{st} T^K_c(\hat{\F}^G_{n})$.
\end{proof}
Since quantiles are isotonic with $\geq_{st}$, Proposition \ref{boot} ensures that, with this method, we obtain a smaller critical value and, correspondingly, a higher value of the power function under $\mathcal{H}_1$. {On the other hand, a test obtained by this method does not satisfy anymore the properties implied by Proposition \ref{test} (size and unbiasedness).}

\subsection{Testing star-shapedness}
\label{subsec4.2}
{Now, let us consider tests for $\mathcal{H}_0:G\geq_* F$ versus $\mathcal{H}_1:G\not\geq_* F$.} Note that $G\geq_* F$ requires that $F(0)=0$. Define the \textit{greatest star-shaped minorant} of a function $\phi$, as the largest star-shaped function $\phi_*$ that does not exceed $\phi$ {(see Sect. 2 of \cite{Wang1987} for a formal definition)}.
The same arguments used earlier may determine the least favorable distribution, {under $\mathcal{H}_0$,} of the test statistic
\begin{equation*}\label{starrr}
{T^K_*(\F_n)
=\sup_{i=2,\ldots,n-1} \left(K\circ G^{-1} (\tfrac{i-1}n)-K\circ(G^{-1} \!\circ\! \F_n)_{*}(X_{(i)})\right).}
\end{equation*}
Let $\Psi^G_p(F)=(G^{-1}\circ F)_*(F^{-1}(p))$. The following lemma establishes the order-reversing behavior of $\Psi_p^G$, corresponding to Lemma~\ref{lemma}.

\begin{lemma}\label{lemma2}
Assume that $G\geq_\ast F$. Then $\Psi^G_{ i/n}(G_n)\leq \Psi^G_{ i/n}(F^*_n)$, for $i=1,\ldots,n$.
\end{lemma}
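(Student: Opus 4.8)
The plan is to prove Lemma~\ref{lemma2} exactly as its convex counterpart, Lemma~\ref{lemma}, replacing the greatest convex minorant $(\cdot)_c$ by the greatest star-shaped minorant $(\cdot)_*$, and to reduce the entire statement to a single monotonicity property of $(\cdot)_*$ under composition with a star-shaped map. First I would put the two sides of the claimed inequality into a common form. Write $\theta=G^{-1}\circ F$; the null hypothesis $G\geq_* F$ says precisely that $\theta$ is star-shaped, and since $G\geq_* F$ forces $F(0)=0$ while we already assume $G(0)=0$, we also have $\theta(0)=0$. Because the shifted sample is $x_i^\ast=F^{-1}\circ G(y_i)=\theta^{-1}(y_i)$, the empirical CDF satisfies $F_n^\ast=G_n\circ\theta$, whence $G^{-1}\circ F_n^\ast=(G^{-1}\circ G_n)\circ\theta$. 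Setting $s=G^{-1}\circ G_n$ and using $x_{(i)}^\ast=\theta^{-1}(y_{(i)})$, the claim $\Psi^G_{i/n}(G_n)\leq\Psi^G_{i/n}(F_n^\ast)$ becomes
\[
s_*\big(y_{(i)}\big)\;\leq\;(s\circ\theta)_*\big(\theta^{-1}(y_{(i)})\big),\qquad i=1,\dots,n .
\]

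The key step is then the composition inequality $(s\circ\theta)_*\geq s_*\circ\theta$, valid for any non-negative $s$ and any star-shaped $\theta$ with $\theta(0)=0$. To establish it I would verify that $s_*\circ\theta$ is itself star-shaped: for $x>0$ with $\theta(x)>0$ one factors
\[
\frac{s_*(\theta(x))}{x}=\frac{s_*(\theta(x))}{\theta(x)}\cdot\frac{\theta(x)}{x},
\]
where the first factor is non-decreasing because $s_*$ is star-shaped and $\theta$ is non-decreasing, the second factor is non-decreasing because $\theta$ is star-shaped, and both are non-negative, so the product is non-decreasing (the remaining range, where $\theta(x)=0$, gives $s_*(\theta(x))=s_*(0)=0$ and is trivial). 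Since $s_*\leq s$ also yields $s_*\circ\theta\leq s\circ\theta$, the function $s_*\circ\theta$ is a star-shaped minorant of $s\circ\theta$ and therefore lies below the greatest one, i.e.\ $(s\circ\theta)_*\geq s_*\circ\theta$. Evaluating at $\theta^{-1}(y_{(i)})$ gives $(s\circ\theta)_*(\theta^{-1}(y_{(i)}))\geq s_*(y_{(i)})$, which is the desired relation; this is the star-shaped analogue of Theorem~2.2 of \cite{tenga1984} used in Lemma~\ref{lemma}, and indeed the convex case follows by the same argument with ``star-shaped'' replaced by ``convex''.

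The computation is short, so the main obstacle is entirely in the technical bookkeeping around generalized inverses and discreteness. I must justify the identity $F_n^\ast=G_n\circ\theta$ with the quantile conventions in force, namely the relation $\theta^{-1}=F^{-1}\circ G$ and the matching of the $i$-th order statistics $x_{(i)}^\ast=\theta^{-1}(y_{(i)})$, which is where the assumption $G(0)=0$ is needed to avoid degenerate behaviour at the origin. I must also confirm that the greatest star-shaped minorant of the step function $s=G^{-1}\circ G_n$ is well defined and non-negative; here non-negativity is immediate, since the constant function $0$ is a star-shaped minorant of $s$ and hence $s_*\geq 0$, which is what legitimizes the ratio argument above. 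Once these points are settled, the preservation of star-shapedness under composition does all the work, in perfect parallel with the convex case, and Lemma~\ref{lemma2} then feeds into Theorem~\ref{teo} exactly as Lemma~\ref{lemma} feeds into Proposition~\ref{test}.
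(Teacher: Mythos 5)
Your proposal is correct in its core and takes a genuinely different route from the paper. The paper's proof works with $u=F^{-1}\circ G$ (anti-star-shaped under $\mathcal{H}_0$, since it is the inverse of your $\theta$) and with the explicit analytic description of the greatest star-shaped minorant of the empirical function, following \cite{wang1988}: setting $a_i=\tfrac{1}{y_i}G^{-1}(\tfrac{i-1}{n})$ and $\alpha_i=\min\{a_j,\,j=i,\ldots,n\}$, it writes $(G^{-1}\circ G_n)_*(x)=\alpha_i x$ piecewise, observes that $G^{-1}\circ F_n^*$ and $G^{-1}\circ G_n$ take the same values $G^{-1}(\tfrac{i-1}{n})$ at the corresponding abscissas $u(y_i)$ and $y_i$, and reduces the claim to the slope comparison $u(y_i)/y_i\geq u(y_k)/y_k$ for $k\geq i$, which is exactly anti-star-shapedness of $u$. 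You instead prove the structural inequality $(s\circ\theta)_*\geq s_*\circ\theta$ by checking that $s_*\circ\theta$ is a star-shaped minorant of $s\circ\theta$; your factorization of $s_*(\theta(x))/x$ into two nonnegative nondecreasing factors is sound, and the justification $s_*\geq 0$ via the zero minorant is correct. Your approach buys generality and economy: it needs no closed-form minorant, and it delivers the convex counterpart (Lemma~\ref{lemma}, i.e.\ Theorem~2.2 of \cite{tenga1984}) by the same scheme --- with the one extra observation, which you should state, that the greatest convex minorant of the nondecreasing $s$ is itself nondecreasing (true: if it decreased on $[0,x_1]$, replacing it by its maximum with the constant $\phi_c(x_1)\leq s(0)$ would give a larger convex minorant), since convexity is preserved under composition only when the outer function is nondecreasing. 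Two bookkeeping points you flag need one more line each: the identities $\theta^{-1}=F^{-1}\circ G$, $F_n^*=G_n\circ\theta$ and $\theta(\theta^{-1}(y_{(i)}))=y_{(i)}$ hold when $G$ is continuous and strictly increasing on its support, which matches the paper's setting ($G$ known with $G(0)=0$, e.g.\ $G=\mathcal{E}$); in general one only has $\theta(\theta^{-1}(y))\geq y$, but since a nonnegative star-shaped function is nondecreasing, $s_*(\theta(\theta^{-1}(y_{(i)})))\geq s_*(y_{(i)})$ and your final evaluation survives. Note also that the paper never needs the functional identity $F_n^*=G_n\circ\theta$ --- it only matches values at the sample points --- which slightly lightens the regularity burden relative to your reduction; apart from these repairable technicalities, your argument is a valid and arguably cleaner proof of Lemma~\ref{lemma2}.
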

\begin{proof}
To simplify notations, let $y_1,\ldots,y_n$ be ordered realizations {from $G$}. The function $u=F^{-1}\circ G$ is, by assumption, anti-star-shaped, and recall that $x_i^*=u(y_i)$ are the observations which determine $F_n^*$. Note that $G^{-1}\circ G_n(y_i)=G^{-1}(i/n)$. We now describe explicitly $(G^{-1}\circ G_n)_*$ (see \cite{wang1988}). Let
$$a_i=\frac{1}{y_i}G^{-1}\left(\frac{i-1}n\right),\quad i=1,\ldots,n,$$
be the slope of the line connecting the origin to the point $(y_i,G^{-1}(({i-1})/n))$. The analytical expression of the greatest star-shaped minorant is given by $(G^{-1}\circ G_n)_*(x)=\alpha_ix$, for $x\in[y_{i-1},y_i)$, where {$y_0=0$} and $\alpha_i=\min\{a_j,\,j=i,\ldots,n\}$ (note that $\alpha_1=0$, and, moreover, star-shapedness requires that the slopes are increasing). $(G^{-1}\circ F^*_n)_*$ is defined similarly. Note that the two functions have different domains, but coincide at corresponding points: $G^{-1}\circ F^*_n(x_i^*)=G^{-1}\circ G_n(y_i)=G^{-1}(({i-1})/n)$. We want to prove that $(G^{-1}\circ F^*_n)_*(u(y_i))\geq (G^{-1}\circ G_n)_*(y_i)$, for $i=1,\ldots n$. It is sufficient to prove
\begin{equation*}
\frac{G^{-1}(\frac {k-1}n)}{u(y_k)}u(y_i)\geq \frac{G^{-1}(\frac {k-1}n)}{y_k}y_i \iff \frac{u(y_i)}{y_i}\geq \frac{u(y_k)}{y_k},
\end{equation*}
for $k=i,\ldots,n$, which follows form the anti-star-shapedness of the function $u$, that is, $u(x)/x$ is decreasing.
\end{proof}
{Using Lemma \ref{lemma2} and similar arguments as those used in the previous subsection, we can ensure that the least favorable distribution of $T^K_*$, under $\mathcal{H}_0$, is determined by simulating from $G$. Accordingly, the test determined by $P(T^K_*(\G_n)\geq c_{\alpha,n}) = \alpha$ has size $\alpha$ and it is unbiased against anti-star-shaped alternatives. Moreover, the bootstrap estimator which arises from the isotonic estimate $G\circ(G^{-1}\circ F_n)_*$, denoted as $\tilde{\F}^G_{n}$, satisfies an equivalent property to that established by Proposition \ref{boot}. We summarize these results as follows (the proof is omitted).
\begin{proposition}\label{star teo}Under $\mathcal{H}_0:G\geq_* F$,
\begin{enumerate}\item  $T^K_*(\G_n)\geq_{st}T^K_*(\F_n)$;
\item $T^K_*(\G_n)\geq_{st} T^K_*(\tilde{\F}^G_{n})$.
\end{enumerate}
\end{proposition}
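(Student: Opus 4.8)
The plan is to handle both parts by the same template already used for the convex case in Propositions~\ref{test} and \ref{boot}, replacing the greatest convex minorant by the greatest star-shaped minorant and Lemma~\ref{lemma} by Lemma~\ref{lemma2}. The starting point is the observation that, since $\F_n^{-1}(i/n)=X_{(i)}$, one has $\Psi^G_{i/n}(\F_n)=(G^{-1}\circ\F_n)_*(X_{(i)})$, so the statistic admits the representation
\begin{equation*}
T^K_*(\F_n)=\sup_{i=2,\ldots,n-1}\left(K\circ G^{-1}\left(\tfrac{i-1}n\right)-K\circ\Psi^G_{i/n}(\F_n)\right).
\end{equation*}
Thus controlling $T^K_*$ reduces entirely to controlling the quantities $\Psi^G_{i/n}$ pointwise, exactly as in the proof of Proposition~\ref{test}.

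For part~1, I would invoke Lemma~\ref{lemma2} to get $\Psi^G_{i/n}(G_n)\leq\Psi^G_{i/n}(F^*_n)$ for every coupled pair of realizations $G_n$, $F^*_n$. Because $K$ is strictly increasing, each summand $K\circ G^{-1}((i-1)/n)-K\circ\Psi^G_{i/n}(\cdot)$ is larger for $G_n$ than for $F^*_n$, so taking the supremum over $i$ yields $T^K_*(G_n)\geq T^K_*(F^*_n)$ for all such realizations. Since $G\geq_* F$ places us in the transform-order setting of Theorem~\ref{teo} (with $F^{-1}\circ G$ anti-star-shaped), this pointwise inequality, read through the probability integral transform coupling exactly as in the proof of that theorem, yields $T^K_*(\G_n)\geq_{st}T^K_*(\F_n)$.

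For part~2, I would set $\tilde{F}^G_n=G\circ(G^{-1}\circ F^*_n)_*$, the isotonic (star-shaped) estimate that plays the role of $F^G_n$ in Proposition~\ref{boot}, and note that it is a minorant of $F^*_n$, so that $(\tilde{F}^G_n)^{-1}\geq(F^*_n)^{-1}$. The key point is that $G^{-1}\circ\tilde{F}^G_n=(G^{-1}\circ F^*_n)_*$ is already star-shaped, hence coincides with its own greatest star-shaped minorant; consequently
\begin{equation*}
\Psi^G_{i/n}(\tilde{F}^G_n)=(G^{-1}\circ F^*_n)_*\!\left((\tilde{F}^G_n)^{-1}(\tfrac in)\right)\geq(G^{-1}\circ F^*_n)_*\!\left((F^*_n)^{-1}(\tfrac in)\right)=\Psi^G_{i/n}(F^*_n),
\end{equation*}
where the inequality uses the quantile ordering just noted together with the monotonicity of $(G^{-1}\circ F^*_n)_*$. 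Chaining this with Lemma~\ref{lemma2} gives $\Psi^G_{i/n}(G_n)\leq\Psi^G_{i/n}(F^*_n)\leq\Psi^G_{i/n}(\tilde{F}^G_n)$, and the same supremum argument as in part~1 produces $T^K_*(G_n)\geq T^K_*(\tilde{F}^G_n)$ pointwise, whence $T^K_*(\G_n)\geq_{st}T^K_*(\tilde{\F}^G_n)$ by Theorem~\ref{teo}.

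I expect the main obstacle to lie not in the supremum manipulations, which are routine, but in justifying the behaviour of the greatest star-shaped minorant on the discrete support: that $\tilde{F}^G_n$ really is a minorant of $F^*_n$, that composing with $G^{-1}$ and taking the star-shaped minorant commute as claimed, and that $(G^{-1}\circ F^*_n)_*$ is genuinely non-decreasing so that the quantile inequality transfers. Since star-shapedness is defined through slopes measured from the origin, as made explicit by the construction $\alpha_i=\min\{a_j:\,j\geq i\}$ in the proof of Lemma~\ref{lemma2}, these verifications demand slightly more care than their convex counterparts, but they follow the same pattern and are precisely what the omitted proof would need to spell out.
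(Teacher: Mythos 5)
Your proof is correct and is essentially the paper's own argument: the paper omits the proof of Proposition~\ref{star teo}, stating only that it follows from Lemma~\ref{lemma2} and ``similar arguments as those used in the previous subsection,'' and your write-up reproduces exactly that template --- the representation of $T^K_*$ through $\Psi^G_{i/n}$, the pointwise comparison $\Psi^G_{i/n}(G_n)\leq\Psi^G_{i/n}(F^*_n)$ from Lemma~\ref{lemma2} combined with the coupling of Theorem~\ref{teo} for part~1, and for part~2 the minorant/quantile chain $\Psi^G_{i/n}(F^*_n)\leq\Psi^G_{i/n}(\tilde F^G_n)$ mirroring the proof of Proposition~\ref{boot}, using that $(G^{-1}\circ F^*_n)_*$ is its own greatest star-shaped minorant. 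The details you flag as needing care (monotonicity of the star-shaped minorant, and the minorant property of $\tilde F^G_n$, which rests on the continuity of $G$ so that $G^{-1}\circ G$ acts as the identity) are precisely what the paper leaves implicit in the convex case as well, and they go through.
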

Differently from the tests discussed in the Section 4.1, which have been studied extensively in the literature, tests of the null hypothesis $G\geq_* F$ {are relatively} unexplored, so it is worth discussing them further. In fact, our results can be useful for testing goodness-of-fit to the IHRA family, which is particularly relevant in reliability and survival analysis, or some generalizations of this property, as described by \cite{barlow1971}.}

{\subsubsection{An example: testing the IHRA property}}
\label{subsub:IHRA}
{A CDF $F$ is said to be IHRA if the \textit{hazard function} $H=-\ln(1-F)$ is star-shaped, or equivalently, if the \textit{hazard rate average} (HRA) function ${H(x)}/x$ is increasing \citep{marshall2007}. Tests of the IHRA property have already been considered in the literature, however, these are based on the null hypothesis that $F$ is exponential, versus IHRA alternatives \citep{deshpande,klefsjo,kochar1985testing,link,wells,ahmad}, which is clearly different from our approach.}

Plugging {$G=K=\mathcal{E}$} into the expression of $T^K_*$, we obtain a Kolmogorov-Smirnov type test statistic, which can be used to test goodness-of-fit to the IHRA family. More generally, we may consider the following restricted hypotheses $\mathcal{H}^\nu_0: H|_{S_\nu}$ is star-shaped versus $\mathcal{H}^\nu_1: H|_{S_\nu}$ is not star-shaped, where $S_\nu=\{x:x\leq F^{-1}(1-\nu)\}$, $\nu\in[0,1]$ (clearly $\mathcal{H}_0$ coincides with $\mathcal{H}_0^\nu$). Let us define $n_\nu$ by $X_{(n_\nu )} = \F_n^{-1}(1-\nu)$, and $S_{n,\nu}=\{x\leq X_{(n_\nu )}\}$. Accordingly, we have the following restricted test statistic:
\begin{equation*}
T^\nu_*(\F_n)=\sup_{i=2,\ldots,n_\nu-1} \left(\tfrac{i-1}n-\mathcal{E}\circ\left(\mathbb{H}_n|_{S_{n,\nu}}\right)_{*}(X_{(i)})\right),
\end{equation*}
where $\mathbb{H}_n=-\ln(1-\F_n)$ is the empirical hazard function.
It is easy to see that this restricted test still satisfies the properties established by Proposition \ref{star teo}, for every $\nu\in[0,1]$. Therefore, the distribution of $T^\nu_*$ is obtained by simulating from $G=\mathcal{E}$. We reject $\mathcal{H}_0$ when $T_*^\nu(F_n) \geq c_{\alpha,n,\nu}$, where $c_{\alpha,n,\nu}$ is the solution of $P(T_*^\nu(\G_n)\geq c_{\alpha,n,\nu})= \alpha$, giving the size of the test. The test is unbiased against alternatives that exhibit a decreasing hazard rate average (DHRA), corresponding to an anti-star-shaped behavior of $H$, in $S_\nu$. Moreover, for every $\nu\in(0,1]$, it is possible to establish the following consistency property.
\begin{theorem}
If $\mathcal{H}_0^\nu$ is false, $\lim_{n\rightarrow \infty}P(T_*^{\nu}(\F_n)\geq c_{\alpha,n,\nu})=1$, for every $\nu>0$.
\end{theorem}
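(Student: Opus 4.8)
The plan is to contrast the asymptotic behavior of the (deterministic) threshold $c_{\alpha,n,\nu}$ with that of the statistic $T_*^\nu(\F_n)$: the former tends to $0$, while the latter converges to a strictly positive constant whenever $\mathcal{H}_0^\nu$ is false, and consistency follows at once. To organize this, introduce the population functional
\begin{equation*}
\tau_\nu(F)=\sup_{x\in S_\nu}\bigl(F(x)-\mathcal{E}\circ(H|_{S_\nu})_*(x)\bigr),
\end{equation*}
where $H=-\ln(1-F)$ and $(\cdot)_*$ is the greatest star-shaped minorant. The key identity is $\mathcal{E}\circ H=1-\exp(\ln(1-F))=F$, so that $\mathcal{E}\circ(H|_{S_\nu})_*\le F$ on $S_\nu$, with equality everywhere precisely when $H|_{S_\nu}$ is star-shaped. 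Hence $\tau_\nu(F)=0$ under $\mathcal{H}_0^\nu$, whereas $\tau_\nu(F)=:\delta>0$ whenever $\mathcal{H}_0^\nu$ fails. Note that $T_*^\nu(\F_n)$ is the empirical counterpart of $\tau_\nu$, since $\tfrac{i-1}{n}=\mathcal{E}\circ\mathbb{H}_n(X_{(i)}^-)$.

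The threshold is controlled first. Since $c_{\alpha,n,\nu}$ is the upper $\alpha$-quantile of $T_*^\nu(\G_n)$ with $\G_n$ sampled from $G=\mathcal{E}$, and the exponential hazard $H(x)=x$ is linear (hence its own star-shaped minorant), I would show $T_*^\nu(\G_n)\to\tau_\nu(\mathcal{E})=0$ almost surely; as $P(T_*^\nu(\G_n)\ge\varepsilon)\to0$ for every $\varepsilon>0$, this forces $c_{\alpha,n,\nu}\to0$. I would then prove $T_*^\nu(\F_n)\to\tau_\nu(F)$ almost surely under the true $F$, relying on three ingredients: by Glivenko--Cantelli $\F_n\to F$ uniformly, and since $\nu>0$ keeps $1-\F_n\ge\nu$ on $S_{n,\nu}$, the map $-\ln(1-\cdot)$ is uniformly continuous there, so $\mathbb{H}_n\to H$ uniformly on $S_\nu$; the endpoint $X_{(n_\nu)}=\F_n^{-1}(1-\nu)\to F^{-1}(1-\nu)$, so $S_{n,\nu}\to S_\nu$; and the order statistics become dense in $S_\nu$, so the discrete supremum defining $T_*^\nu$ approximates the supremum over $S_\nu$. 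Combined with the convergence of the minorants, these give $T_*^\nu(\F_n)\to\tau_\nu(F)$.

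The crux, and the main obstacle, is to establish that the greatest star-shaped minorant is continuous for uniform convergence on a compact interval bounded away from the origin: if $\phi_m\to\phi$ uniformly on $[a,b]$ with $a>0$, then $(\phi_m)_*\to\phi_*$ uniformly. I would derive this from the explicit representation $\phi_*(x)=x\inf_{t\in[x,b]}\phi(t)/t$ (the same slope description already used in the proof of Lemma~\ref{lemma2}): away from $0$ the factor $1/t$ is bounded, so $\phi_m(t)/t\to\phi(t)/t$ uniformly, and the infimum is $1$-Lipschitz in the supremum norm. The restriction $a>0$ is harmless: because both $F$ and $\mathcal{E}\circ H_*$ vanish at $0$ and are continuous, the positive value $\tau_\nu(F)=\delta$ is attained on a compact subinterval $[a,b]\subset S_\nu$ separated from the origin, and since the slope infimum only looks forward ($t\ge x$), restricting the evaluation to $[a,b]$ does not alter the minorant there.

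Finally I would combine the two steps. Under $\mathcal{H}_1^\nu$ we have $T_*^\nu(\F_n)\to\delta>0$ almost surely while $c_{\alpha,n,\nu}\to0$, so for all large $n$ one has $c_{\alpha,n,\nu}<\delta/2$, whence
\begin{equation*}
P\bigl(T_*^\nu(\F_n)\ge c_{\alpha,n,\nu}\bigr)\ge P\bigl(T_*^\nu(\F_n)\ge\delta/2\bigr)\to1,
\end{equation*}
which is the asserted consistency for every $\nu>0$.
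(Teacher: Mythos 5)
Your proposal is correct and follows the same two-pronged skeleton as the paper's own proof: show that the critical value $c_{\alpha,n,\nu}$ tends to $0$, and that under $\mathcal{H}_1^\nu$ the statistic stays above a strictly positive level, namely the paper's $d=\sup_{S_\nu}(F-F^{\mathcal{E}}_\nu)$, which is exactly your $\delta=\tau_\nu(F)$. The genuine difference lies in the key technical ingredient. The paper invokes Lemma~3.1 of Wang (1988), which applies only when the target function is itself star-shaped, to handle the null/threshold side, and then merely \emph{asserts} that $\F^{\mathcal{E}}_{n,\nu}$ converges uniformly to $F^{\mathcal{E}}_\nu$ under the alternative --- a step that requires continuity of the greatest star-shaped minorant under uniform convergence, which is precisely what you prove from the representation $\phi_*(x)=x\inf_{t\in[x,b]}\phi(t)/t$. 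So your lemma both subsumes the Wang citation and supplies the justification the paper glosses over; you also prove more than strictly needed (two-sided convergence $T_*^\nu(\F_n)\to\tau_\nu(F)$, where the paper settles for the eventual lower bound $d-\epsilon$). One caveat: your restriction to $[a,b]$ with $a>0$ is fine for the alternative step, where the forward-looking nature of the slope infimum lets you work on a compact set away from the origin, but in the threshold step you need the supremum over all of $S_{n,\nu}$, \emph{including near the origin}, to vanish, and your lemma as stated (``away from $0$ the factor $1/t$ is bounded'') does not cover that region. The fix is one line, already latent in your own Lipschitz computation: the prefactor $x$ cancels the worst case $1/t\le 1/x$, giving nonexpansiveness on the whole interval,
\begin{equation*}
\bigl|(\phi_m)_*(x)-\phi_*(x)\bigr|\le x\sup_{t\in[x,b]}\frac{|\phi_m(t)-\phi(t)|}{t}\le x\cdot\frac1x\,\sup_{[0,b]}|\phi_m-\phi|=\sup_{[0,b]}|\phi_m-\phi|,
\end{equation*}
which recovers Wang's lemma as the special case $\phi=\phi_*$; alternatively, the crude bound $0\le\mathcal{E}\circ(\mathbb{H}_n)_*\le\F_n$ controls the discrepancy near $0$ directly. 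With that observation your argument is complete and, if anything, more self-contained than the paper's.
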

\begin{proof}
{Note that $T^\nu_*(\F_n)=\sup_{S_{n,\nu}}\left|\mathbb{F}_n(x)-\mathcal{E}\circ(\mathbb{H}_n|_{S_{n,\nu}})_*(x)\right|-1/n$, so we may equivalently consider the ``$\sup$'' on the right-hand side, which is easier to manipulate, instead of $T^\nu_*$. By the Glivenko-Cantelli Theorem, $\F_n$ converges a.s. and uniformly to $F$, whereas $\mathcal{E}^{-1}(p)=-\ln(1-p)$ is uniformly continuous in $[0,1-\nu]$ (for $\nu>0$), then $\mathbb{H}_n=\mathcal{E}^{-1}\circ \F_n$ converges a.s. and uniformly to $H=\mathcal{E}^{-1}\!\circ\! F$ in $S_\nu$. If $\mathcal{H}_{0}^\nu$ is true, we can apply Lemma 3.1 of \cite{wang1988}, which yields
\begin{equation*}
\sup_{S_{n,\nu}}\left|\mathbb{H}_n(x)-H(x)\right|\geq\sup_{S_{n,\nu}}\left|(\mathbb{H}_n|_{S_{n,\nu}})_*(x)-H(x)\right|,
\end{equation*}
implying strong uniform consistency of $(\mathbb{H}_n|_{S_{n,\nu}})_*$ in ${S_\nu}$ (clearly, if $\nu>0$, $\F_n^{-1}(1-\nu)$ converges a.s. to $F^{-1}(1-\nu)$). Since $\mathcal{E}$ is absolutely continuous, then $\F^\mathcal{E}_{n,\nu}:=\mathcal{E}\circ (\mathbb{H}_n|_{S_{n,\nu}})_*$ converges a.s. and uniformly to $F$ in $S_\nu$, for every $\nu>0$. Now, it can be easily seen that, under $\mathcal{H}_{0}^\nu$, $c_{\alpha,n,\nu}\rightarrow0$ a.s. for $n\rightarrow\infty$.}

{Define $F^\mathcal{E}_\nu=\mathcal{E}\circ (\mathcal{E}^{-1}\circ F|_{S_\nu})_*$. Suppose that $\mathcal{H}_{0}^\nu$ is false, that is, $\mathcal{E}\circ F$ is not star-shaped in $S_\nu$, and let $d=\sup_{S_\nu} |F-F^\mathcal{E}_\nu|=\sup_{S_\nu} (F-F^\mathcal{E}_\nu)$. Then, $F_n$ converges uniformly to $F$, whereas $\F^{\mathcal{E}}_{n,\nu}$ converges uniformly to $F^\mathcal{E}_\nu$ on $S_\nu$, with probability 1. Moreover, by assumption, $d>0$. Therefore, given some $\epsilon\in(0,d)$, there exists some $n_0$ such that, for $n>n_0$, $\sup_{S_{n,\nu}} |F_n-F|<\epsilon/2$ and $\sup_{S_{n,\nu}} |\F^{\mathcal{E}}_{n,\nu}-F^\mathcal{E}_\nu|<\epsilon/2$, with probability 1. Then, for $n>n_0$,\begin{equation*}
\F_n(x)-\F^{\mathcal{E}}_{n,\nu}(x)> F(x)-\frac\epsilon2-(F^\mathcal{E}_\nu(x)+\frac\epsilon2)=F(x)-F^\mathcal{E}_\nu(x)-\epsilon
\end{equation*}
almost surely, for every $x\in S_{n,\nu}$, which implies
\begin{equation*}
\sup_{S_{n,\nu}}\left|\F_n(x)-\F^{\mathcal{E}}_{n,\nu}(x)\right|> \sup_{S_{n,\nu}}\left| F(x)-\F^{\mathcal{E}}_{n,\nu}(x)-\epsilon\right|=\sup_{S_{n,\nu}} \left(F(x)-\F^{\mathcal{E}}_{n,\nu}(x)\right)-\epsilon=d-\epsilon>0.
\end{equation*}
Therefore, since $\epsilon$ can be arbitrarily small, $P(\sup_{S_{n,\nu}}\left|\F_n(x)-\F^{\mathcal{E}}_{n,\nu}(x)\right|\geq d)\rightarrow 1$, for $n\rightarrow\infty$. But since $c_{\alpha,n,\nu}\rightarrow0$, then $P(T_*^\nu(\F_n)\geq c_{\alpha,n,\nu})\rightarrow1$.}\end{proof}

{The value of $\nu$ should be chosen as small as possible, e.g. $\nu=0.01,0.05$. One might even choose $\nu$ as a function of $n$, for example $\nu=n^{-1/2}$. Yet, smaller $\nu$'s yield slower convergence, as we will need more observations in order to get $c_{\alpha,n,\nu}$ close enough to 0. In the limit case $\nu=0$, we cannot ensure uniform a.s. consistency of $\F^\mathcal{E}_{n,0}$ in $S_0$, accordingly, the consistency of the test $T^0_*$ cannot be established.}

{\subsubsection{Simulations}}

{The {numerical performance} of the test $T^\nu_*$ can be depicted by a simulation study. We focus on some critical alternatives, obtained using popular parametric families. By scale invariance, the scale parameters are always set to 1. We consider DHRA, bathtub-shaped (decreasing-increasing) HRA, and bell-shaped (increasing-decreasing) HRA models. The Weibull distribution with CDF $F(x)=1-e^{-x^a}$, $a,x>0$, is DHRA for $a<1$ and IHRA for $a\geq1$; the beta distribution, with CDF $F(x)=\int_0^x t^{a-1} (1-t)^{b-1}\,dt/{B(a,b)}$, $a,b>0$, $x\in[0,1]$, is bathtub-shaped HRA for $a<1<b$ and IHRA for $a,b\geq1$; the Burr distribution with CDF $F(x)={1}-\left({1+x^{b}}\right)^{-a}$, $x,a,b>0$, is DHRA for $b\leq1$ and bell-shaped HRA for $b>1$.}

The results {are reported in Figure 2, which contains the plots of the rejection probabilities at level $\alpha=0.1$, estimated using 2000 simulation runs. To cover a large number of alternatives, for each distribution considered we fixed a grid of parameters} within some intervals of interest. We considered sample sizes $n=10,50,100,200,300$, setting $\nu=0.05$, so the restriction has an effect just for $n\geq 50$. The {figures show} that the test has a remarkable performance in terms of empirical power, coherently with the behavior of the HRA. In fact, some alternatives considered are particularly difficult to detect, as the HRA may exhibit a very slight decrease (Weibull with $a\in[0.8,1)$) or a subtle non-monotonic behavior (beta with $a\geq0.7$; Burr with $b\geq1.5$). In the first case, the power is always larger than $\alpha$, independently of the sample size, confirming the unbiasedness property implied by Proposition \ref{star teo}; in the latter case, the power may be smaller than $\alpha$, for small values of $n$ (beta with $a\geq0.7$), confirming that unbiasedness of tests of this type holds just against DHRA alternatives, but when the HRA has a non-monotonic behavior, this is not guaranteed. Under the null hypothesis, the size is always bounded by $\alpha$, even when the HRA is close to the boundary of DHRA (Weibull with $a> 1)$. Clearly, as the test is consistent, the empirical power increases with sample size, when $\mathcal{H}_0^{0.05}$ is false{, even under the most citical alternatives} (this should be evaluated just for $n\geq50$, as remarked earlier; the beta with {$a\geq0.7$ is the most critical case and the power may be increased by increasing the sample size, or by choosing a larger value of $\nu$}).
\begin{figure}[h!]
\centering
\includegraphics[viewport=88 225 528 735,clip=true]{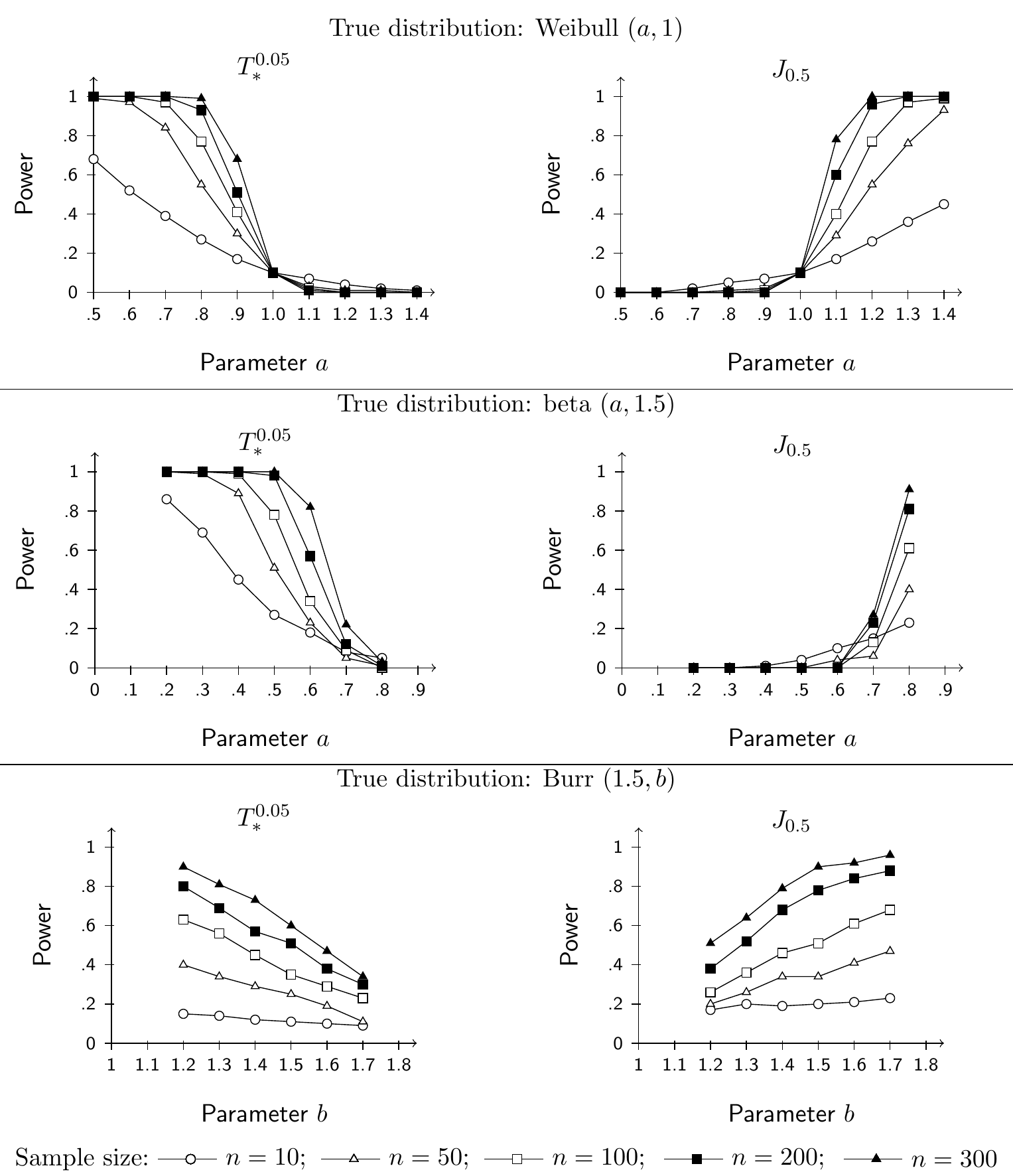}
\caption{Rejection rates of $T_*^{0.05}$ and $J_{0.5}$ {for} IHRA (Weibull $a\geq 1$), DHRA (Weibull $a< 1$), bathtub-shaped HRA (beta) and bell-shaped HRA (Burr) {distributions}.}
\end{figure}

The test is compared with the one proposed by \cite{deshpande} and later studied by \cite{bandy}, aimed at testing the null hypothesis $\widetilde{\mathcal{H}}_0:F=\mathcal{E}$ versus $\widetilde{\mathcal{H}}_1$: ``$F$ is IHRA and not exponential'', as in \cite{klefsjo}, \cite{kochar1985testing}, \cite{link}, \cite{wells}, \cite{ahmad}. This approach is quite different from ours, in fact, such tests are supposed to reject the IHRA hypothesis more easily, when it is false, compared to $T_*^\nu$. However, our simulation results reveal some possible issues of these kinds of tests. Actually, \cite{deshpande} introduced a family of rank-based test statistics, denoted as $J_p$, where the parameter $p$ ranges in $(0,1)$. As proved by \cite{bandy}, the choice for $p$ does not have consequences on the properties of the test, therefore, in the plots we show the results of $J_{0.5}$, which is the test recommended by \cite{deshpande}. This test is unbiased and consistent, for every $p$, under IHRA alternatives. However, {deviations from exponentiality should not necessarily imply that $\widetilde{\mathcal{H}}_1$ is true, as }tests of this type are not intended to deal with bathtub or bell-shaped HRA models; indeed $J_p$ may be misleading when $F$ has a non-monotone HRA, as it can be seen in the case of the beta distribution ($a\geq0.7$) or Burr distribution. In such cases, by increasing the sample size we also increase the power, that is, {the probability of rejecting $\widetilde{\mathcal{H}}_0$, suggesting that the (false) strict IHRA alternative may be true} (we obtained similar results for several values of $p$). {In other words, in some critical bathtub and bell-shaped HRA situations, $J_{p}$ might lead to the wrong decision. }So, given that non-monotone HRA distributions may be quite common, we recommend using $T_*^\nu$. Looking at {the plots}, it is important to remember that $T_*^\nu$ and $J_p$ cannot be compared in terms of power, since they deal with reversed hypotheses, {hence} one may only compare their overall performance. This example also shows why it is important to study and understand the behavior of the test statistics using ordering constraints.
%
%


\bibliographystyle{elsarticle-harv}
\bibliography{biblio}





\end{document}